\newtheorem{theorem}{Theorem}[section]
\newtheorem{proposition}[theorem]{Proposition}
\newtheorem{lemma}[theorem]{Lemma}
\theoremstyle{definition}
\newtheorem{definition}[theorem]{Definition}
\theoremstyle{remark}
\newtheorem{remark}[theorem]{Remark}
\newcommand{\diver}{\mathop{\rm div}\nolimits}
\newcommand{\mN}{{\mathbb N}}
 \newcommand{\cB}{\mathcal{B}}
 \newcommand{\cC}{\mathcal{C}}
\newcommand{\cF}{{\mathcal F}}
\newcommand{\cY}{{\mathcal Y}}
\newcommand\cZ{\mathcal Z}
\newcommand{\RR}{\mathbb{R}}
\DeclareMathOperator\curl{curl}
\begin{document}

\begin{abstract}
We characterize, using commuting zero-flux homologies, those volume-preserving vector fields on a $3$-manifold that are steady solutions of the Euler equations for some Riemannian metric. This result extends Sullivan's homological characterization of geodesible flows in the volume-preserving case. As an application, we show that the steady Euler flows cannot be constructed using plugs (as in Wilson's or Kuperberg's constructions). Analogous results in higher dimensions are also proved.
\end{abstract}

\title[A characterization of 3D steady Euler using commuting homologies]{A characterization of 3D steady Euler flows using commuting zero-flux homologies}

\thanks{}

\author{Daniel Peralta-Salas}
\address{Instituto de Ciencias Matem\'aticas, Consejo Superior de
Investigaciones Cient\'\i ficas, 28049 Madrid, Spain}
\email{dperalta@icmat.es}

\author{Ana Rechtman}
\address{IRMA, Universit\'e de Strasbourg, 7 rue Ren\'e Descartes,
  67084 Strasbourg, France}
\email{rechtman@math.unistra.fr}

\author{Francisco Torres de Lizaur}
\address{Max Planck Institute for Mathematics, Vivatsgasse 7, 53111 Bonn, Germany}
\email{ftorresdelizaur@mpim-bonn.mpg.de}

\date{}

\maketitle

\section{Introduction and main theorem}

The dynamics of an inviscid and incompressible fluid flow on a Riemannian\linebreak $3$-manifold $M$ is described by the Euler equations:
\begin{equation*}
\partial_t X+ \nabla_X X=-\nabla P\,, \;\; \diver X=0\,,
\end{equation*}
where $X$ is the velocity field of the fluid (which is a non-autonomous vector field on $M$) and $P$ is the pressure function, which is uniquely defined by the equations up to a constant. The operator $\nabla_X$ denotes the covariant derivative of a vector field along $X$ and $\diver$ is the divergence operator computed with respect to the Riemannian volume form.

When the vector field $X$ does not depend on time, we say that it is a stationary solution of the Euler equations, which models a fluid flow in equilibrium. It is well known~\cite{AKh, PS16} that the stationary Euler equations can be equivalently written as
\begin{equation*}
X\times \curl X=\nabla B\,, \;\; \diver X=0\,,
\end{equation*}
where $B:=P+\frac12|X|^2$ is the Bernoulli function of the fluid. We recall that given a metric $g$ and a volume form $\mu$, the $\curl$ operator and the vector product $\times$ are defined as
\[
i_{\curl X} \mu= d(i_X g) \,\,,\,\, i_{X\times Y} g=i_{Y} i_X \mu \,,
\]
where $i_Z g$ denotes the $1$-form dual to the vector field $Z$ using the metric.

An important milestone in the study of the stationary Euler flows, which marked the birth of the modern Topological
Hydrodynamics, is Arnold's structure theorem~\cite{AKh}. Roughly speaking, it shows that, when $X$ and $\curl X$ are not collinear, they behave as integrable Hamiltonian systems with 2 degrees of freedom. For extensions of this theorem to higher dimensions see~\cite{GK94}. The ``degenerate'' case corresponds to the so called \emph{Beltrami flows}, which are defined as those divergence-free vector fields such that $\curl X$ is proportional to $X$ (via a not necessarily constant proportionality factor).

The geometric wealth of the steady Euler flows has been unveiled in the last years and its study has attracted the attention of many people. Etnyre and Ghrist, developing an idea suggested by Sullivan, showed the equivalence between Reeb flows of a contact form and non-vanishing Beltrami fields with constant proportionality factor~\cite{EG00,EG02}; the case of general Beltrami fields corresponds to volume-preserving geodesible flows (or Reeb flows of stable Hamiltonian structures), as noticed by Rechtman~\cite{Anaphd,Re10}. More recently, Cieliebak and Volkov~\cite{CV17} constructed steady Euler flows that are not geodesible, and Izosimov and Khesin~\cite{IK17} characterized the vorticity functions of 2-dimensional steady Euler flows using Reeb graphs. Nevertheless, we are still far from having a deep understanding of the space of stationary solutions to the Euler equations.

Our goal in this paper is to give a complete characterization of
steady Euler flows {\it \`a la Sullivan} using zero-flux $2$-chains for a vector field that commutes with $X$. This contains, as a particular case, Sullivan's characterization of geodesible flows in terms of tangent homologies~\cite{Sullivan2} (for volume-preserving fields). To this end, we introduce the definition of an~\emph{Eulerisable flow}:

\begin{definition}[Eulerisable flow]\label{D:Euler} Let $M$ be a $3$-dimensional manifold endowed with a volume form $\mu$.  We say that a volume-preserving vector field $X$ is \emph{Eulerisable} if there is a Riemannian metric $g$ on $M$ (not necessarily inducing the volume form $\mu$) for which $X$ is a stationary solution to the Euler equations
\begin{equation*}
X \times \curl X=\nabla B\,, \qquad L_X \mu=0\,,
\end{equation*}
for some (Bernoulli) function $B : M \rightarrow \RR$. Here, $L_X$ denotes the Lie derivative. Equivalently, a volume-preserving vector field $X$ is Eulerisable if there is a Riemannian metric $g$ such that
$$i_Xd\alpha=-dB\,,$$
where $\alpha:=i_Xg$ is the $1$-form dual to $X$.
\end{definition}

\begin{remark}
When the Riemannian volume form $\mu_g$ does not coincide with the volume form $\mu$, the Euler equations presented above describe the behavior of an ideal \emph{barotropic} fluid, i.e., a fluid whose density is a function of the pressure. Indeed, writing $\mu=\rho\, \mu_g$, $\rho>0$ is a function which plays the role of the density. Then the vector field $X$ solves the equation $\rho \nabla_XX=-\rho\nabla p=:-\nabla \hat p$ provided that $\rho$ is a function of $p:=B-\frac12 \alpha(X)$, and $L_{\rho X}\mu_g=0$. In fact, if $X$ is a non-vanishing Eulerisable flow, the proof of the implication $(ii) \Rightarrow (i)$ of Theorem~\ref{T:main} below shows that the metric $g$ can be taken to be compatible with $\mu$, i.e. the Riemannian volume form $\mu_g$ coincides with $\mu$.
\end{remark}

To state our main theorem let us first introduce some notation. For a vector field $X$, we denote by $\cF_X$ the set of the boundaries of zero-flux $2$-chains, i.e.
$$
\cF_X =\Big\{\partial c \,:\, c \, \text{is a 2-chain with $\int_{c} i_X \mu=0$}\Big\}\,.
$$
Let $\mathcal{Z}_{X}$ and $\mathcal{C}_X$ be the cone of foliation currents and of foliation cycles of the vector field $X$, respectively (see Appendix~\ref{S:appendix} for a short review on the theory of currents). We recall that a \emph{foliation current} of a vector field $X$ is a \mbox{$1$-current} that can be approximated arbitrarily well (in the weak topology) by tangent $1$-chains. Equivalently, a foliation current can be approximated by \mbox{$1$-currents} of the form
\[
\sum_{i=1}^N c _i\, \delta^{p_i}_X\,,
\]
with $N \in \mathbb{N}$, $c_i \in [0, \infty)$ and $p_i \in M$. Recall that for any $p\in M$ the $1$-current $\delta^{p}_X$ is defined as
\[
\delta^{p}_X(\alpha)=\alpha_{p}(X) \text{ for any $1$-form $\alpha$} \, .
\]
A \emph{foliation cycle} is a closed foliation current, i.e., a foliation current whose kernel contains the linear subspace of exact $1$-forms.

All along this paper we say that a manifold is closed if it is compact and without boundary. The following is our main result:

\begin{theorem}\label{T:main}
Let $X$ be a non-vanishing volume-preserving vector field on a closed 3-manifold $M$ with trivial first cohomology group, $H^1(M)=0$. The following properties are equivalent:
\begin{enumerate}[(i)]
\item $X$ is Eulerisable.
\item There exists a $1$-form $\alpha$ such that $\alpha(X)>0$ and $\iota_Xd\alpha$ is closed.
\item There exists a (non-identically zero) vector field $Y$ that commutes with $X$, i.e. $[X,Y]=0$, such that no sequence of elements in $\cF_Y$ can approximate a non-trivial foliation cycle of $X$, that is, $\overline{\cF_Y}\cap
  \cC_X =\{0\}$.
\end{enumerate}
\end{theorem}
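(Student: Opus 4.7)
The plan divides the proof into the nearly tautological equivalence $(i) \Leftrightarrow (ii)$ and the substantive equivalence $(ii) \Leftrightarrow (iii)$. For $(i)\Rightarrow(ii)$ I take $\alpha := i_X g$ from Definition~\ref{D:Euler}: then $\alpha(X) = |X|_g^2 > 0$ and $i_X d\alpha = -dB$ is closed. For $(ii)\Rightarrow(i)$, I construct a metric $g$ by declaring $X$ orthogonal to the rank-$2$ distribution $\ker\alpha$ (well defined since $\alpha(X) > 0$), setting $g(X,X) = \alpha(X)$, and choosing any positive-definite form on $\ker\alpha$; since $H^1(M) = 0$, the closed $1$-form $i_X d\alpha$ is the differential of a Bernoulli function.

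For $(ii)\Rightarrow(iii)$ I define $Y$ by $i_Y\mu := d\alpha$. Then $L_Y\mu = 0$ and $Y\not\equiv 0$ (otherwise $\alpha$ would be exact, contradicting $\alpha(X) > 0$ on a closed manifold). The identity $i_{[X,Y]}\mu = L_X i_Y\mu = d\,L_X\alpha = d\,i_Xd\alpha = 0$ gives $[X,Y] = 0$. Any $2$-chain $c$ with $\int_c i_Y\mu = 0$ satisfies $\alpha(\partial c) = \int_c d\alpha = 0$, so $\alpha$ annihilates $\overline{\cF_Y}$ by continuity. On the other hand, every foliation cycle $T \in \cC_X$ decomposes (Appendix~\ref{S:appendix}) as $T(\beta) = \int\beta(X)\,d\nu_T$ for a nonnegative $X$-invariant measure $\nu_T$, so $T(\alpha) > 0$ whenever $T\neq 0$. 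Consequently $\overline{\cF_Y} \cap \cC_X = \{0\}$.

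The heart of the theorem is $(iii)\Rightarrow(ii)$, which I plan to attack by Hahn--Banach separation in the space of $1$-currents with the weak-$*$ topology. The cone $\cC_X$ is closed and convex, and its unit cross-section (equivalently, after Schwartzman--Sullivan, the set of $X$-invariant probability measures) is compact for non-vanishing $X$. The subspace $\overline{\cF_Y}$ is closed and meets $\cC_X$ only at $0$. This produces a continuous functional, strictly positive on $\cC_X\setminus\{0\}$ and vanishing on $\overline{\cF_Y}$; since $1$-currents are the weak-$*$ dual of smooth $1$-forms, this functional is itself a smooth $1$-form $\alpha$. Strict positivity on the Dirac currents $\delta^p_X$ forces $\alpha(X) > 0$ pointwise. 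Vanishing of $\alpha$ on $\cF_Y$ reads: $\int_c d\alpha = 0$ whenever $\int_c i_Y\mu = 0$; two linear functionals on the space of $2$-chains with the same kernel are proportional, so $d\alpha = \lambda\,i_Y\mu$ for a \emph{constant} $\lambda\in\RR$. One must have $\lambda\neq 0$, since $\lambda = 0$ would make $\alpha$ exact and incompatible with $\alpha(X)>0$. After rescaling, $d\alpha = i_Y\mu$, and then $i_Xd\alpha = i_X i_Y\mu$ is closed because $d(i_X i_Y\mu) = L_X i_Y\mu = 0$ using $[X,Y] = 0$ and $L_Y\mu = 0$.

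The main obstacle is the Hahn--Banach step: verifying the compactness of the cross-section of $\cC_X$ and extracting a separating functional that is simultaneously strictly positive on $\cC_X\setminus\{0\}$ and vanishes on the entire subspace $\overline{\cF_Y}$. This typically requires combining Sullivan-type compactness with a quotient argument in the topological vector space $\Omega^1(M)^*/\overline{\cF_Y}$. A subtler point is that the proportionality coefficient $\lambda$ comes out as a genuine real scalar and not a function of the base point: this is the crucial payoff of working with $2$-chains (global linear functionals) rather than infinitesimal tangent planes, and it is precisely what forces $i_Xd\alpha$ to be closed.
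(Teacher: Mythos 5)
Your proposal is correct in its overall architecture and follows the paper's strategy: the same metric construction for $(i)\Leftrightarrow(ii)$, the same vorticity field $i_Y\mu=d\alpha$ for $(ii)\Rightarrow(iii)$, and a Hahn--Banach separation of the compact base of the foliation cone from the closed subspace $\overline{\cF_Y}$ for $(iii)\Rightarrow(ii)$. Where you genuinely diverge is in deducing $d\alpha=\lambda\, i_Y\mu$ with $\lambda$ constant. The paper gets this by a \emph{second} Hahn--Banach application in the space of $2$-currents (separating $d\alpha$ from the line $\RR\, i_Y\mu$ in $\Omega^2$), which forces them to prove an auxiliary approximation lemma: any $2$-current $c$ with $c(i_Y\mu)=0$ has $\partial c\in\overline{\cF_Y}$, obtained by correcting an approximating sequence of $2$-chains by small multiples of a fixed chain of nonzero flux. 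You instead observe that $c\mapsto c(d\alpha)$ vanishes on the kernel of $c\mapsto c(i_Y\mu)$ on the vector space of $2$-chains, so the two functionals are proportional (you only need containment of kernels, not equality, but that is what you have). Since $2$-chains separate $2$-forms, $d\alpha=\lambda\, i_Y\mu$ pointwise. This is more elementary and avoids the approximation lemma entirely; the paper's detour does yield the extra fact recorded in its Remark~\ref{R:curr}, which is later used in the plug application, but it is not needed for Theorem~\ref{T:main} itself.

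One point needs a line of care. You separate $\overline{\cF_Y}$ from the cone $\cC_X$ of foliation \emph{cycles} and then claim strict positivity on the Dirac currents $\delta^p_X$; but $\delta^p_X$ is a foliation current, not in general a cycle, so positivity on $\cC_X\setminus\{0\}$ does not by itself give $\alpha(X)>0$ pointwise. You must separate $\overline{\cF_Y}$ from the compact base of the full cone $\cZ_X$ of foliation currents (this is the cone with Sullivan's compact base), and the hypothesis $\overline{\cF_Y}\cap\cC_X=\{0\}$ suffices for this because every element of $\overline{\cF_Y}$ is a limit of boundaries and hence a cycle, so $\cZ_X\cap\overline{\cF_Y}=\cC_X\cap\overline{\cF_Y}$. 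Also, in your last display the vanishing of $i_YL_X\mu$ uses $L_X\mu=0$ (the hypothesis on $X$), not $L_Y\mu=0$; the latter is what gives $di_Y\mu=0$ so that $d(i_Xi_Y\mu)=L_Xi_Y\mu$. Both are trivial to fix.
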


The proof of Theorem~\ref{T:main} yields the following characterization (new to the best of our knowledge) of Reeb fields in contact geometry, which is of independent interest. Notice that in its statement we do not need to assume that $X$ is volume-preserving.

\begin{theorem}\label{T:reeb}
Let $M$ be a closed 3-manifold which is not a fibration over the circle. Then a vector field $X$ is the Reeb field of a contact structure (possibly after rescaling) if and only if there is a smooth volume form $\mu$ such that no sequence of elements in $\cF_X$ can approximate a non-trivial foliation cycle of $X$, that is, $\overline{\cF_X}\cap
  \cC_X =\{0\}$.
\end{theorem}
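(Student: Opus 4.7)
The plan is to prove the two directions separately. The forward implication I would obtain by an explicit choice of $\mu$ adapted to the contact form, while for the reverse implication I would combine a Sullivan-style Hahn--Banach separation with a pointwise algebraic identification of the separating $1$-form. The topological hypothesis that $M$ does not fibre over $S^1$ will enter only in the reverse direction, to exclude a degenerate ``closed $1$-form'' case via Tischler's theorem.

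For $(\Rightarrow)$, write $X=R/f$ with $f>0$, where $R$ is the Reeb field of a contact form $\alpha$ (so $\alpha(R)=1$ and $i_Rd\alpha=0$), and take $\mu:=f\,\alpha\wedge d\alpha$. Since $\alpha(X)=1/f$ and $i_Xd\alpha=0$, a one-line computation yields $i_X\mu=d\alpha$, so a $2$-chain $c$ has zero $X$-flux iff $\int_{\partial c}\alpha=\int_c d\alpha=0$. Hence every element of $\cF_X$ annihilates $\alpha$, and so does every element of $\overline{\cF_X}$ by continuity of the pairing. On the other hand $\alpha(X)>0$ pointwise, so every nonzero $T\in\cC_X$ satisfies $T(\alpha)>0$, forcing $\overline{\cF_X}\cap\cC_X=\{0\}$.

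For $(\Leftarrow)$, I would note first that $\cF_X$ is an $\RR$-linear subspace of $1$-currents (zero flux being linear and closed under negation), so $\overline{\cF_X}$ is a closed subspace. A Sullivan-style Hahn--Banach argument, separating a compact cross-section of the cone $\cC_X$ from the closed subspace $\overline{\cF_X}$ inside the locally convex space of currents, produces a smooth $1$-form $\alpha$ that is strictly positive on $\cC_X\setminus\{0\}$ and vanishes on $\overline{\cF_X}$. Adding an appropriate exact form---which annihilates all boundaries, and in particular $\cF_X$---upgrades $\alpha$, as in Sullivan's original argument, so that $\alpha(X)>0$ at every point of $M$.

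The vanishing of $\alpha$ on $\cF_X$ reads via Stokes as $\int_c d\alpha=0$ whenever $\int_c i_X\mu=0$. Since these are two linear functionals on the space of $2$-chains with one kernel contained in the other, there is a constant $\lambda_0\in\RR$ with $\int_c d\alpha=\lambda_0\int_c i_X\mu$ for every $c$. Testing on arbitrarily small simplices gives the pointwise identity $d\alpha=\lambda_0\,i_X\mu$, whence $i_Xd\alpha=\lambda_0\,i_Xi_X\mu=0$. If $\lambda_0\neq 0$, then $d\alpha$ is nowhere zero, and using $\alpha\wedge\mu=0$ by dimension, $\alpha\wedge d\alpha=\lambda_0\,\alpha(X)\,\mu$ is a volume form, so $\alpha$ is contact and $X/\alpha(X)$ is its Reeb vector field. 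The remaining case $\lambda_0=0$ would force $\alpha$ to be a non-vanishing closed $1$-form on $M$, so Tischler's theorem would then give a fibration $M\to S^1$, contradicting the hypothesis. I expect the main technical obstacle to be re-running the Sullivan upgrade from positivity on $\cC_X$ to pointwise positivity of $\alpha(X)$ with $\cF_X$ in place of the full space of boundaries, verifying in particular that the correction terms needed lie in the annihilator of $\cF_X$.
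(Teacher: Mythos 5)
Your overall strategy is the same as the paper's (Hahn--Banach separation plus Tischler to rule out the closed case), and several of your steps are fine or even slightly better: in the forward direction you treat the rescaled case $X=R/f$ explicitly via $\mu=f\,\alpha\wedge d\alpha$, which the paper glosses over; and in the reverse direction your linear-algebra argument (two linear functionals on $2$-chains with nested kernels are proportional, then test on small simplices to get $d\alpha=\lambda_0\, i_X\mu$ pointwise) is a genuine simplification of the paper's second Hahn--Banach application, which instead separates $d\alpha$ from the line $\RR\cdot i_X\mu$ in $\Omega^2$ by a $2$-current and runs an approximation argument to reach a contradiction.

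The one genuine gap is exactly the step you flag and leave open: upgrading ``$\alpha$ positive on $\cC_X\setminus\{0\}$'' to pointwise positivity $\alpha(X)>0$. You do not need a Schwartzman-type correction by an exact form here, and attempting one adds an averaging argument you have not carried out. The point you are missing is that $\overline{\cF_X}$ consists entirely of cycles: elements of $\cF_X$ are boundaries, hence cycles, and the set of cycles is weakly closed (it is the annihilator of the exact $1$-forms). Therefore $\cZ_X\cap\overline{\cF_X}=\cC_X\cap\overline{\cF_X}=\{0\}$, where $\cZ_X$ is the full cone of foliation \emph{currents}, which by Sullivan has a compact convex base $K$. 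Separating $K$ (not a base of $\cC_X$) from the closed subspace $\overline{\cF_X}$ yields a $1$-form $\alpha$ strictly positive on all of $\cZ_X\setminus\{0\}$, in particular on every Dirac current $\delta_X^p$, which is literally the statement $\alpha(X)>0$ at every point --- no correction term needed. With this substitution your argument closes, and the rest of your reverse direction ($i_Xd\alpha=0$, the contact condition $\alpha\wedge d\alpha=\lambda_0\,\alpha(X)\,\mu$ when $\lambda_0\neq 0$, and Tischler when $\lambda_0=0$) matches the paper. A minor caveat: both the nowhere-vanishing of $d\alpha=\lambda_0\,i_X\mu$ and the mass estimates behind ``nonzero foliation cycles pair positively with $\alpha$'' implicitly use that $X$ is non-vanishing, so you should state that hypothesis (it is forced anyway if $X$ is to be a rescaled Reeb field).
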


We prove Theorems~\ref{T:main} and~\ref{T:reeb} in Sections~\ref{S:proof} and~\ref{S:Reeb}, respectively. Following Sullivan~\cite{Sullivan2}, key to the proofs is the Hahn-Banach theorem, which allows us to produce functionals that separate certain subsets of $1$-currents. The main application of Theorem~\ref{T:main}, which is presented in Section~\ref{S:plugs}, is to show that Eulerisable flows cannot be constructed using plugs. This proves, in particular, that an Eulerisable flow cannot contain plugs exhibiting Reeb components, as in Wilson's construction~\cite{Wilson,Ku96}. The fact that the steady Euler flows cannot contain Reeb components (not necessarily associated to a plug) was first observed in~\cite{CV17}. Finally, in Section~\ref{S:N} we prove a higher dimensional analog of Theorem~\ref{T:main} and its application showing that the steady Euler flows cannot exhibit plugs in arbitrary dimensions. For the reader's convenience, we have included Appendix~\ref{S:appendix} that contains a short introduction to the aspects of the theory of currents that are used in the proofs.

\section{Proof of the main theorem}\label{S:proof}

We will first establish the equivalence between items $(i)$ and $(ii)$ and then between items $(ii)$ and $(iii)$. In the proof, the volume form preserved by $X$ is denoted by $\mu$.

\subsection*{$(i) \Rightarrow (ii)$.} This is straightforward: let $g$ be a metric for which $X$ is a stationary solution of the Euler equations and define the dual $1$-form $\alpha:=i_{X}g$. Then $\alpha(X)>0$ and, since $X$ is an Euler flow, $i_X d\alpha$ is exact.

\subsection*{$(ii) \Rightarrow (i)$.} Let $\xi$ be the 2-plane distribution defined by the kernel of $\alpha$. It is standard that we can endow $\xi$ with a smooth Riemannian metric $g_{\xi}$, which can be extended trivially to a smooth degenerate quadratic form on $TM$, satisfying $g_{\xi}(X, \cdot)=0$. Then,
\[
g:= \frac{1}{\alpha(X)} \alpha\otimes\alpha +g_{\xi}
\]
is a metric on $M$ and it clearly verifies that $i_{X}g=\alpha$. Since $i_Xd\alpha$ is closed (and hence exact because $H^1(M)=0$), $X$ is a solution to the Euler equations with the metric $g$ and the volume form $\mu$. Notice that the freedom on the quadratic form $g_\xi$ allows one to multiply it by an appropriate (nonconstant) factor so that the Riemannian volume form induced by $g$ is the same as $\mu$.

\subsection*{$(ii) \Rightarrow (iii)$.} Let $Y$ be the vector field defined as $i_{Y} \mu:=d \alpha$.  It is easy to check that, since $X$ is volume-preserving, one has
\[
i_{[X, Y]} \mu=L_{X}  i_{Y} \mu \,.
\]
Since $d i_{Y} \mu=0$, we have that $Y$ preserves the volume $\mu$ as well, and also
\[
L_{X}  i_{Y} \mu=d i_{X} i_{Y} \mu=d i_{X} d \alpha=0 \,,
\]
thus implying that $X$ and $Y$ commute. It just remains to be proven that $\overline{\cF_Y}\cap \cC_X =\{0\}$. Suppose it is not the case, i.e., that there is a sequence of 2-chains $c_n$, with $c_n(i_Y \mu)=0$ and a non-zero foliation cycle $b$ satisfying
\[
\underset{n \rightarrow \infty}{\text{lim }}\partial c_n(\beta)=b(\beta)
\]
for any $1$-form $\beta$.
Now, for the $1$-form $\alpha$ we have on  one hand that
\[
\partial c_n(\alpha)= c_n (d\alpha)=c_n(i_Y \mu)=0\,,
\]
and on the other hand that
\[
b(\alpha)=\int_{b} \alpha>0
\]
because $\alpha(X)>0$ and $b$ is a foliation cycle of $X$. We arrive to a contradiction. Accordingly, $\overline{\cF_Y}\cap \cC_X =\{0\}$.

\subsection*{$(iii) \Rightarrow (ii)$.} Let $\cZ^1$ denote the vector space of $1$-currents on $M$, that is, the continuous dual of the space $\Omega^{1}$ of smooth $1$-forms on $M$ (see Appendix~\ref{S:appendix}).

It is well known \cite{Sullivan} that the set of foliation $1$-currents $\cZ_X \subset \cZ^1$ is a closed convex cone with compact convex base, i.e. there is a compact convex set $K \subset \cZ_X\setminus\{0\}$ such that
$$
\cZ_X=\{\lambda K \,,\, \lambda \in[0, \infty)\}\,.
$$
By item $(iii)$, since $\cZ_X\cap \overline{\cF_Y}=\cC_X\cap \overline{\cF_Y}$, we conclude that $K$ cannot intersect $\overline{\cF_Y}$. Since $\overline{\cF_Y}$ is a closed vector subspace of $\cZ^1$, a standard application of the Hahn-Banach theorem (see e.g.~\cite[Chapter 4, Theorem 4.5]{Simon}) ensures the existence of a continuous linear functional $\mathfrak{F}: \cZ^1 \rightarrow \RR$ that is strictly positive in $K$ (thus strictly positive in $\cZ_X \setminus \{0\}$ as well) and vanishes in $\overline{\cF_Y}$.

The continuous dual of $\cZ^1$ being $\Omega^{1}$, we can identify the continuous linear functional $\mathfrak{F}$ with a $1$-form $\alpha$; this form verifies, on the one hand, that
\begin{equation}\label{1}
\alpha(X)>0\,,
\end{equation}
because $b(\alpha) >0$ for any $b \in \cZ_X \setminus \{0\}$ and on the other hand
\begin{equation}\label{2}
\partial c(\alpha)=c(d \alpha) =0
\end{equation}
for any 2-current $c$ such that  $\partial c\in \overline{\cF_Y}$. In particular, $i_Yd\alpha=0$ because any \mbox{$2$-current} $c'$ tangent to $Y$ can be approximated by a sequence of $2$-chains tangent to $Y$ (and hence of zero flux), thus implying that $\partial c'\in \overline{\cF_Y}$ and so $c'(d\alpha)=0$.

Finally, let us show that $i_{X} d \alpha$ is closed and hence exact because $M$ is assumed to have trivial first cohomology group. Consider the linear subspace of $2$-forms that are proportional to $i_Y\mu$:
\[
\cY:=\{\omega \in
\Omega^2 \,,\, \omega=t i_Y \mu \,,\, t\in \RR\}\,.
\]
We claim that $d \alpha=  T i _Y \mu$ for some constant $T\neq 0$. This implies that $Y$ is volume-preserving and, since $X$ and $Y$ commute, it readily follows that $ d(i_X d\alpha)=0$.

Indeed, first notice that $d\alpha$ cannot be identically zero; otherwise, since $H^{1}(M)=0$, the $1$-form $\alpha$ would be nondegenerate and exact, which is not possible on a closed manifold. Now, let $\cZ^2$ be the vector space of 2-currents on $M$. Suppose there is no such $T$, thus $d\alpha \notin \cY$. Again, by a standard application of the Hahn-Banach theorem, there exists a 2-current $c\in \cZ^2$ that is positive on $d\alpha$ and whose kernel contains $\cY$. Accordingly, we have that $c(i_Y \mu)=0$. We claim that $\partial c \in \overline{\cF_Y}$. Indeed, let $\{c_k\}$ be a sequence of $2$-chains that converge (in the weak topology) to the \mbox{$2$-current} $c$. By continuity, it follows that $\int_{c_k} i_Y\mu=:\epsilon_k$ with $\epsilon_k \to 0$ as $k\to \infty$. Take a $2$-chain $b$ such that $\int_b i_Y\mu \neq 0$ (this obviously exists because $Y$ is not identically zero). Then, the sequence of $2$-chains defined as
$$
\widetilde{c_k}:=c_k-\frac{\epsilon_k}{\int_bi_Y\mu}\,b
$$
has zero flux, i.e. $\int_{\widetilde{c_k}}i_Y\mu=0$ and converges in the weak topology to $c$. The continuity of the boundary operator implies that $\partial \widetilde{c_k}$ converges to $\partial c$, thus proving the claim. Finally, by Equation~\eqref{2} we have that $\partial c (\alpha)=c(d \alpha)=0$, which contradicts the fact that $c$ is positive on $d\alpha$. So $d \alpha \in \cY$, as we wanted to show. This completes the proof of the theorem.

\begin{remark}\label{itoiii}
The implication $(i) \Rightarrow (iii)$ holds on any closed $3$-manifold (no need to assume that $H^1(M)=0$).
\end{remark}

\begin{remark}\label{R:vortex}
From the proof of $(ii) \Leftrightarrow (iii)$ we also obtain a characterization of the set of vorticities of a given vector field $X$. More precisely, let $X$ be a non-vanishing vector field on a closed 3-manifold $M$ with volume form $\mu$ and assume that $M$ is not a fibration over the circle (so that, by Tischler's theorem~\cite{Tis}, there do not exist nondegenerate closed $1$-forms). Then, a vector field $Y$ can be written as $Y=T\curl X$ for some metric $g$ and nonzero constant $T$ if and only if $\overline{\cF_Y}\cap\cC_X =\{0\} $.
\end{remark}

\begin{remark}\label{R:curr}
It follows from the proof of the implication $(iii) \Rightarrow (ii)$ that if $c$ is a zero-flux $2$-current for a vector field $Y$, i.e. $c(i_Y\mu)=0$, then $\partial c\in \overline{\cF_Y}$.
\end{remark}

\begin{remark}\label{R:Su}
In the particular case that $Y=X$, it is enough to assume that $\overline{\cB_X}\cap \cC_X =\{0\}$, where $\cB_X$ is the set of boundaries of tangent $2$-chains, i.e.
$$
\cB_X =\{\partial c \,|\, c \, \text{is a 2-chain tangent to $X$}\}\,.
$$
Indeed, Hahn-Banach theorem implies that there exists a $1$-form $\alpha$ such that $\alpha(X)>0$ and $i_Xd\alpha=0$. Since $X$ is non-vanishing, it then follows that $d\alpha=Fi_X\mu$ for some function $F:M\to \RR$. It is easy to check that $F$ is a first integral of $X$ because $L_X\mu=0$. We can then define a vector field $Y:=FX$ that commutes with $X$, i.e. $[X,Y]=0$, and satisfies the zero-flux condition in item~$(iii)$. Applying then Theorem~\ref{T:main} we conclude that $X$ is an Eulerisable flow with constant Bernoulli function (because $i_Xd\alpha=0$), so it is geodesible. This is consistent with Sullivan's characterization~\cite{Sullivan2} of geodesible volume-preserving fields.
\end{remark}

\begin{remark}
According to Remark~\ref{R:vortex}, if $M$ is not a fibration over the
circle, the assumption $\overline{\cF_Y}\cap \cC_X =\{0\}$ implies
that the $1$-form $\alpha$ constructed in the proof of the implication
$(iii) \Rightarrow (ii)$ satisfies $d\alpha=Ti_Y\mu$ for some nonzero
constant $T$; in particular, $\diver Y=0$ and $\overline{\cB_Y}\cap
\cC_X =\{0\}$ (see Remark~\ref{R:Su} for a definition of $\cB_Y$). We
believe that, in general, the existence of a commuting vector field
$Y$ such that $\diver Y=0$ and $\overline{\cB_Y}\cap \cC_X =\{0\}$
does not imply the existence of another commuting field $Y'$ with
$\overline{\cF_{Y'}}\cap \cC_X =\{0\}$; this would show that, contrary
to Sullivan's characterization of geodesible flows, there is no hope
to characterize the Eulerisable flows using only commuting tangent
homologies (instead of commuting zero-flux homologies). This is
supported by the fact that $\diver Y=0$ and $\overline{\cB_Y}\cap
\cC_X =\{0\}$ imply that $d\alpha=Fi_Y\mu$ for some function $F$
(provided that $Y$ is non-vanishing); in the particular case that
$Y=X$, this is a characterization of geodesible volume preserving or stable Hamiltonian flows, which are not contact in general, i.e. the function $F$ is genuinely not constant~\cite[Section~3.9]{CV15}.
\end{remark}

\begin{remark}
We observe that the condition $\overline{\cF_Y}\cap \cC_X =\{0\}$ in item $(iii)$ is independent from the existence of a volume-preserving vector field $Y$ that commutes with $X$. Indeed, using the $C^\infty$ volume-preserving plug introduced by G. Kuperberg~\cite{Ku96} one can construct (in a flow-box) a volume-preserving vector field $X$ that is axisymmetric and contains a Reeb cylinder. The axisymmetry condition is equivalent to saying that there is a volume-preserving vector field $Y$ that commutes with $X$, $[X,Y]=0$. Since Euler flows cannot exhibit plugs (see Section~\ref{S:plugs}), we conclude that the zero-flux homology condition is independent from the existence of $Y$.
\end{remark}

\begin{remark}\label{R:Tao}
Following Tao~\cite{Tao}, we say that a non-vanishing vector field $X$ admits a strongly adapted $1$-form $\alpha$ if $\alpha(X)>0$ and $i_Xd\alpha$ is exact. If $X$ is volume-preserving, the proof of Theorem~\ref{T:main} shows that this is equivalent to being Eulerisable (no need to assume that $H^1(M)=0$). If $X$ is not assumed to preserve a volume form, a simple variation of the proof of Theorem~\ref{T:main} allows one to prove the following: Let $M$ be a closed $3$-manifold with $H^1(M)=0$ endowed with a volume form $\mu$; then $X$ admits a strongly adapted $1$-form $\alpha$ if and only if there exists a (non-identically zero) vector field $Y$ satisfying $[X,Y]=-(\diver X)\,Y$  (i.e., $L_X i_Y \mu=0$) and such that no sequence of elements in $\cF_Y$ can approximate a non-trivial foliation cycle of $X$, that is, $\overline{\cF_Y}\cap\cC_X =\{0\}$. Given $X$ and a strongly adapted $1$-form $\alpha$, the vector field $Y$ is simply defined as $i_Y\mu=d\alpha$. We refer to Remark~\ref{R:TaoN} in Section~\ref{S:N} for a generalization of this result to arbitrary dimensions.
\end{remark}

\section{Proof of Theorem~\ref{T:reeb}}\label{S:Reeb}

We recall that a contact form on a 3-manifold $M$ is a $1$-form $\alpha$ whose associated 3-form $\alpha \wedge d \alpha$ is a volume form (the 2-plane distribution $\xi:= \text{ker } \alpha$ is then everywhere non-integrable, and is called a contact structure). The unique vector field $X$ satisfying $\alpha(X)=1$ and $i_X d \alpha=0$ is called the Reeb field of the contact form $\alpha$.

Let $X$ be the Reeb vector field of a contact form $\alpha$. In particular, $X$ preserves the volume form $\mu:= \alpha \wedge d \alpha$, and we have that $i_X \mu= d\alpha$. Suppose there is a non-zero element $b \in  \overline{\cF_X}\cap \cC_X$, where we define $\cF_{X}$ using the volume $\mu$. Then, on the one hand $b(\alpha)>0$, because $b$ is a foliation cycle, but on the other hand $b$ is in $\overline{\cF_X}$, so $b=\partial c$ where $c$ is a $2$-current that satisfies
\[
c(i_X\mu)=c(d\alpha)=b(\alpha)=0\,,
\]
and we get a contradiction. So if $X$ is Reeb, $\overline{\cF_X}\cap \cC_X =\{0\}$.

Now suppose that $\overline{\cF_X}\cap \cC_X =\{0\}$. Arguing as in the proof of the implication $(iii)\Rightarrow (ii)$ in Theorem \ref{T:main}, we construct a $1$-form $\alpha$ satisfying $\alpha(X)>0$ and $d \alpha= T i_X \mu$, for some constant $T$. If $M$ does not fiber over the circle, we must have that $T \neq 0$. But then $\alpha \wedge d \alpha=T \alpha(X) \mu$ is a volume form, so $\alpha$ is a contact form and the rescaled vector field
\[
X':=\frac{1}{\alpha(X)} X
\]
is its Reeb field.

\section{An application: vector fields constructed with plugs are not Eulerisable}\label{S:plugs}

\subsection{Plugs and geodesible flows}\label{S:geod}

In this subsection we introduce the notion of a plug and we show that vector fields constructed with plugs are not geodesible. This implies, in particular, that vector fields with plugs cannot be Beltrami flows; the general case of steady Euler flows will be considered in the next subsection. Plugs were introduced by Wilson~\cite{Wilson} in the context of the Seifert
conjecture. We start with the definition of a plug:

\begin{definition}\label{defn-plug}
A plug is a 3-manifold $P$ with boundary of the form $D\times [-1,1]$,
where $D$ is a compact surface with boundary (usually a disk). $P$ is endowed with a non-vanishing
vector field $X$, such that
\begin{enumerate}
\item $X$ is vertical in a neighborhood of $\partial P$, that is
  $X=\frac{\partial}{\partial z}$, $z\in [-1,1]$. Thus $X$ is inward
  transverse along $D\times \{-1\}$, outward transverse at $D\times \{1\}$ and tangent to the rest of
$\partial P$.

\item There is a point $p\in D\times \{-1\}$ whose positive orbit is
  trapped in $P$. The set $D_{-1}:=D\times \{-1\}$ is called the entry region of the plug.
\item If the orbit of  $q=(x,-1)\in D\times \{-1\}$ is not trapped, then it
  intersects $D\times \{1\}$ at the point $\overline{q}=(x,1)$. We say that $\overline{q}$ is the point facing $q$, and $D_1:=D\times \{1\}$ denotes the exit region of the plug.
\item There is an embedding of $P$ into $\mathbb{R}^3$ preserving the vertical
  direction.
\end{enumerate}
\end{definition}

A plug allows one to change a vector field on a 3-manifold
locally: given a flow-box, the interior can be replaced by the plug,
thus changing the dynamics. For example, the trapped orbits will now limit
to an invariant set contained inside the plug.

Our main result in this subsection is the proof that a plug cannot be geodesible, and hence
any vector field constructed using a plug is not geodesible. For this we use Sullivan's characterization of geodesible fields~\cite{Sullivan2}. The following
proof is taken from~\cite{Anaphd}. We remark that in this subsection, the vector field $X$ is not assumed to be volume-preserving.

\begin{proposition}\label{prop-plugnong}
The vector field inside a plug $(P,X)$ is not geodesible.
\end{proposition}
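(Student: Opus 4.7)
The plan is to assume $X$ is geodesible on $P$ and derive a contradiction by applying Stokes' theorem to a tangent $2$-chain obtained as the flow-out of an arc in the entry disk through the trapped point. By Sullivan's characterization of geodesible non-vanishing vector fields, geodesibility of $X$ would provide a $1$-form $\alpha$ on $P$ with $\alpha(X)>0$ and $i_X d\alpha=0$; compactness of $P$ then yields a uniform bound $\alpha(X)\ge m>0$.

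The construction would go as follows. Fix the trapped point $p\in D_{-1}$. Continuous dependence on initial conditions applied to the positive orbit of $p$ (which stays inside $P$ for all $t\ge0$) supplies a sequence of non-trapped points $q_n\to p$ in $D_{-1}$ with exit times $e(q_n)\to\infty$. I would pick an auxiliary non-trapped point $q\in D_{-1}$ and, for each $n$, join $q$ to $q_n$ by an arc $\tau_n\subset D_{-1}$ lying in the non-trapped region. The candidate tangent $2$-chain is
\[
C_n:=\{\phi_t(x,-1):x\in\tau_n,\;t\in[0,e(x)]\},
\]
parametrized by the 2-dimensional strip $\{(x,t):x\in\tau_n,\,0\le t\le e(x)\}$. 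At every point its tangent plane contains $X$, so $d\alpha$ vanishes on $C_n$ (using $d\alpha(X,\cdot)=-i_Xd\alpha=0$). Its boundary, obtained from the boundary of the parameter strip, decomposes, with appropriate signs, into the entry arc $\tau_n$, the facing exit arc $\bar\tau_n\subset D_1$, and the two full orbit segments $\gamma(q),\gamma(q_n)$ from entry to exit:
\[
\partial C_n=\tau_n-\bar\tau_n+\gamma(q_n)-\gamma(q).
\]

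To extract the contradiction I would apply Stokes' theorem:
\[
0=\int_{C_n}d\alpha=\int_{\partial C_n}\alpha,
\]
which yields
\[
\int_{\gamma(q_n)}\alpha=-\int_{\tau_n}\alpha+\int_{\bar\tau_n}\alpha+\int_{\gamma(q)}\alpha.
\]
The right-hand side is bounded by a constant independent of $n$: the arcs $\tau_n$ and $\bar\tau_n$ sit in the compact sets $D_{-1}$ and $D_1$ with uniformly bounded length, while $\gamma(q)$ is a fixed orbit segment of bounded length. On the other hand, parametrizing by time and using $\alpha(X)\ge m$ gives
\[
\int_{\gamma(q_n)}\alpha=\int_0^{e(q_n)}\alpha(X)\!\left(\phi_t(q_n,-1)\right)dt\;\ge\;m\cdot e(q_n)\;\xrightarrow{n\to\infty}\;\infty,
\]
contradicting the uniform bound and hence ruling out geodesibility.

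The main obstacle is ensuring that the sequence $q_n$ and the arcs $\tau_n$ exist with the stated properties. The existence of $q_n$ with $e(q_n)\to\infty$ is routine from continuous dependence provided $p$ lies in the closure of the non-trapped set, which holds whenever the trapped set has empty interior in $D_{-1}$; this is automatic for the Wilson and Kuperberg plugs. The arcs $\tau_n$ can then be taken in the non-trapped region as long as the trapped entry set is thin enough to be avoided by a path in $D_{-1}$, which again holds for the standard plug constructions; in borderline cases one would perturb $\tau_n$ slightly within $D_{-1}$.
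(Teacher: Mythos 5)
Your proof is correct, and while it uses the same geometric construction as the paper (the tangent $2$-chain swept out by flowing an arc of non-trapped entry points that accumulates on a trapped point), it closes the argument by a genuinely different route. The paper normalizes the surfaces $A_{t_n}$ by the length of the longest orbit, uses mass bounds, the Montel property and the flat norm to extract a limit tangent $2$-current whose boundary is a non-trivial foliation cycle, and then invokes Sullivan's criterion abstractly; you instead take the $1$-form $\alpha$ with $\alpha(X)>0$ and $i_Xd\alpha=0$ furnished by geodesibility (the easy direction of the Gluck--Sullivan criterion), apply Stokes' theorem to the un-normalized chains, and let the term $\int_{\gamma(q_n)}\alpha\ge m\,e(q_n)\to\infty$ produce the contradiction directly. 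Your version is more elementary and self-contained -- no currents, no compactness extraction -- and it makes quantitative exactly what the limit argument encodes. What the paper's heavier route buys is the limit $2$-current $A$ and its foliation-cycle boundary themselves: these are reused verbatim in the proof of Theorem~\ref{T:plugs}, where $i_Xd\alpha=-dB\neq0$ and your Stokes computation no longer vanishes on the nose, so the flux estimate of Lemma~\ref{L.flux} must be run against the limiting current. Two small remarks on your hedging at the end: the empty-interior assumption on the trapped set is unnecessary -- if the trapped set has interior, simply replace $p$ by a point of its topological frontier, which is trapped (the trapped set is closed), lies in the closure of the non-trapped set, and forces $e(q_n)\to\infty$ for any non-trapped $q_n\to p$ by continuity of the flow; this is the same device as the paper's choice of the curve $\sigma$. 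The remaining issue of joining $q$ to the $q_n$ by arcs of uniformly bounded length inside the open non-trapped set is glossed over to exactly the same degree in the paper's own proof (``such a point always exists''), so it is not a gap specific to your argument.
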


\begin{proof}
Let $x\in D_{-1}$ be a point with trapped forward orbit and
assume there is a finite-length curve $\sigma\subset D_{-1}$ from $x$ to $\partial D_{-1}$
such that the orbits of the points $\sigma \setminus \{x\}$ are not
trapped by the plug. Such a point always exists, since the points
in $\partial D_{-1}$ are not trapped. Let $\sigma:[0,1]\to D_{-1}$ be a
parametrisation such that $\sigma(1)=x$ and $\sigma(0)\in \partial D_{-1}$.

We want to show that $X$ is not geodesible. Using Sullivan's theorem~\cite{Sullivan2} we know that it is enough to find a sequence of tangent 2-chains whose boundaries are arbitrarily close to a foliation cycle. Consider the curve $\sigma_t=\sigma([0,t])$, for $t\in [0,1]$. For $t<1$, the orbits of the points in $\sigma_t$ under the flow of $X$ hit $D_1$ after a finite time. Let $A_t$ be the tangent surface defined by the union of the flow-lines of the points in $\sigma_t$, which lie between $D_{-1}$ and $D_1$, i.e.
$$
A_t:=\bigcup_{s=0}^t\gamma(\sigma(s))\,,
$$
where $\gamma(\sigma(s))$ is the $X$-orbit of $\sigma(s)$ inside the plug. Observe that $\widetilde{\sigma_t}:=A_t\cap D_1$ is the curve facing $\sigma_t$, by the exit-entry condition on plugs (item~(3) in Definition~\ref{defn-plug}). Hence we can define $\widetilde{\sigma_1}\subset D_1$ and for every $t\in [0,1]$ we have that $|\sigma_t|=|\widetilde{\sigma_t}|$, where by $|\cdot|$ we will denote the length of the curves and, more generally, the mass of currents (see e.g \cite{Morgan}).

Consider now a sequence $\{t_n\}_{n\in\mN}$ that converges to $1$. Since the orbit of $\sigma(1)=x$ is trapped, and $X$ is non-vanishing, the length of the curve $\gamma(\sigma(t_n))$ goes to infinity as $n\to\infty$, and we can assume without loss of generality that $|\gamma(\sigma(t_m))|\leq |\gamma(\sigma(t_n))|$ for $m \leq n$. Define the sequence of 2-currents
$$
\frac{1}{|\gamma(\sigma(t_n))|}A_{t_n}(\lambda):=\frac{1}{|\gamma(\sigma(t_n))|}\int_{A_{t_n}}\lambda\,,
$$
where $\lambda$ is any $2$-form. We first observe that this sequence of $2$-currents has finite mass. Indeed,
$$
\frac{1}{|\gamma(\sigma(t_n))|}\Big|\int_{A_{t_n}}\lambda\Big|\leq \frac{|A_{t_n}|}{|\gamma(\sigma(t_n))|}\|\lambda\|_{L^\infty}\leq C\|\lambda\|_{L^\infty}\,,
$$
where we have used that $|A_{t_n}|\leq |\gamma(\sigma(t_n))|\cdot|\sigma_{t_n}|\leq C|\gamma(\sigma(t_n))|$, for some constant that does not depend on $n$. This last inequality comes from the assumption that the curve $\sigma$ has finite length.

Moreover, it is clear that the 2-currents $\frac{1}{|\gamma(\sigma(t_n))|}A_{t_n}$ form a sequence of tangent 2-chains. In the following lemma (Lemma \ref{L:limit}) we prove that the boundaries of these 2-chains approach a foliation cycle, thus implying that $X$ cannot be geodesible. The proposition then follows.
\end{proof}

We need to introduce some notation for Lemma~\ref{L:limit}. First, observe that the 2-currents we are considering are normal currents, that is compactly supported currents which have finite mass and boundaries of finite mass as well. In the set of normal currents we consider the flat norm
$$
F(S):=\inf\{|A|+|B|: S=A+\partial B\}\,,
$$
where $S$, $A$ and $B$ are normal currents (for more details see~\cite{Morgan}) and $|\cdot|$ denotes the mass (as defined at the end of the Appendix~\ref{S:appendix}). The set of normal currents is not closed under this norm, however it is easy to check that the flat convergence of normal currents implies the usual weak convergence of currents.

\begin{lemma}\label{L:limit}
$\lim_{n\to \infty}\frac{1}{|\gamma(\sigma(t_n))|}\partial A_{t_n}$ is a non-trivial foliation cycle.
\end{lemma}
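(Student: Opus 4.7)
The plan is to identify the four pieces of $\partial A_{t_n}$ and show that, after normalization, three of them vanish in the limit so that the computation reduces to the asymptotic behavior of a single long orbit segment. Parametrize $A_{t_n}$ by $(s,t)\mapsto \varphi^t(\sigma(s))$ for $s\in[0,t_n]$ and $t\in[0,T(s)]$, where $\varphi^t$ is the flow of $X$ and $T(s)$ is the exit time from $P$ starting at $\sigma(s)$. Then $\partial A_{t_n}$ splits, with appropriate signs, into the transverse curves $\sigma_{t_n}\subset D_{-1}$ and $\widetilde{\sigma_{t_n}}\subset D_1$, together with the two orbit segments $\gamma(\sigma(0))$ and $\gamma(\sigma(t_n))$. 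The first three pieces have mass bounded uniformly in $n$ by $2|\sigma|+|\gamma(\sigma(0))|$, while $|\gamma(\sigma(t_n))|\to\infty$ because $x=\sigma(1)$ is trapped. Hence, after dividing by $|\gamma(\sigma(t_n))|$, these three terms tend to zero in mass (and therefore in the flat and weak topologies), and the limit of $|\gamma(\sigma(t_n))|^{-1}\partial A_{t_n}$ coincides with the limit of $c_n:=|\gamma(\sigma(t_n))|^{-1}\gamma(\sigma(t_n))$.

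Next, I would extract a weak subsequential limit $Z$ of $c_n$ using standard compactness of normal $1$-currents. Each $c_n$ has mass exactly $1$, and $\partial c_n$ is a difference of two Dirac masses scaled by $|\gamma(\sigma(t_n))|^{-1}$, so $|\partial c_n|\to 0$. Compactness of normal currents of uniformly bounded mass and boundary mass produces a subsequential weak limit $Z$, which is closed because $|\partial c_n|\to 0$. Since each $c_n$ is a positive multiple of a tangent oriented orbit segment, it lies in the cone of tangent $1$-chains, and since $\cZ_X$ is defined as the weak closure of that cone, we have $Z\in\cZ_X$; being closed, $Z\in\cC_X$.

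The only nontrivial point is $Z\neq 0$. Here I would pick any auxiliary Riemannian metric $g$ on $M$ and test against the smooth $1$-form $\beta:=i_Xg$, which satisfies $\beta(X)=|X|_g^2>0$ everywhere on $P$. Parametrizing $\gamma(\sigma(t_n))$ by the flow of $X$ for $t\in[0,T_n]$ expresses both the pairing and the length as time integrals, so that
$$c_n(\beta)\;=\;\frac{\int_0^{T_n}|X|_g^2\,dt}{\int_0^{T_n}|X|_g\,dt}\;\geq\;\min_{p\in P}|X(p)|_g\;>\;0,$$
the last inequality using that $X$ is non-vanishing on the compact plug. Passing to the limit yields $Z(\beta)\geq\min_{P}|X|_g>0$, hence $Z\neq 0$.

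The main obstacle is the apparent loss of mass in the weak limit: although $|c_n|=1$ for all $n$, mass is only lower-semicontinuous under weak convergence, so one cannot directly conclude $|Z|\geq 1$. The positivity test above, which uses only that $X$ is non-vanishing on a compact set, is precisely what is needed to rule out total concentration-cancellation and deliver a genuinely nontrivial foliation cycle. Everything else in the argument is a packaging of the identification of $\partial A_{t_n}$ and of the standard compactness statement for normal currents; the subsequential limit is all that is required for the application in Proposition \ref{prop-plugnong}, since Sullivan's criterion only asks for \emph{some} sequence of tangent $2$-chains whose boundaries approach a nonzero foliation cycle.
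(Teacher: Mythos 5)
Your proposal is correct and follows essentially the same route as the paper: you bound the mass of the difference between $\frac{1}{|\gamma(\sigma(t_n))|}\partial A_{t_n}$ and the normalized orbit segments, extract a weak subsequential limit by compactness of mass-bounded currents, and use closedness of the cone $\cC_X$ and continuity of $\partial$. The only (welcome) difference is at the non-triviality step: where the paper simply asserts that the limit has mass one, you test against $\beta:=i_Xg$ to get the lower bound $Z(\beta)\geq\min_P|X|_g>0$, which explicitly rules out the loss of mass that lower semicontinuity alone would permit.
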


\begin{proof}

Consider the sequence of foliation $1$-currents $\frac{1}{|\gamma(\sigma(t_n))|}\gamma(\sigma(t_n))$, we then have that
$$
\Big|\frac{1}{|\gamma(\sigma(t_n))|}\partial A_{t_n}-\frac{1}{|\gamma(\sigma(t_n))|}\gamma(\sigma(t_n))\Big|\leq \frac{1}{|\gamma(\sigma(t_n))|}\Big[|\sigma_1|+|\widetilde{\sigma_1}|+|\gamma(\sigma(0))|\Big]\,,
$$
so the difference converges in the flat norm (and in the weak topology) to zero as $n\to\infty$. Additionally, the flat norm of the $1$-currents $\frac{1}{|\gamma(\sigma(t_n))|}\gamma(\sigma(t_n))$ is less or equal to one, because they have mass one. Since the space of $1$-currents is Montel, there is a convergent subsequence $\frac{1}{|\gamma(\sigma(t_{n_k}))|}\gamma(\sigma(t_{n_k}))$. Hence, the limit defines the $1$-current with mass one (and hence non-trivial):
$$
S:=\lim_{k\to \infty}\frac{1}{|\gamma(\sigma(t_{n_k}))|}\partial A_{t_{n_k}}=\lim_{k\to \infty}\frac{1}{|\gamma(\sigma(t_{n_k}))|}\gamma(\sigma(t_{n_k}))\,.
$$
Since the boundary operator $\partial$ is continuous, it follows that $S$ is a cycle. Moreover, since the space of foliation currents is a closed convex cone $\mathcal{C}_X$ containing the sequence $\frac{1}{|\gamma(\sigma(t_{n_k}))|}\gamma(\sigma(t_{n_k}))$, it contains its limit. Thus $S$ is a foliation cycle, as we wanted to prove.
\end{proof}

\begin{remark} The two important properties of a plug that are used in
  the proof above is that there are trapped orbits and that the map
  from the entry to the exit is absolutely continuous, thus mapping
  curves of bounded length onto curves of bounded length.
\end{remark}

\subsection{Plugs are not Eulerisable}

In this subsection we show that vector fields that are constructed using plugs are not Eulerisable. This implies, in particular, that Euler flows cannot contain Wilson-type plugs (i.e. with Reeb cylinders). We first recall that Eulerisable fields with constant Bernoulli function, i.e. Beltrami flows, are geodesible (because $\alpha(X)>0$ and $i_Xd\alpha=0$), and hence by Proposition~\ref{prop-plugnong} they cannot be constructed using plugs. Accordingly, key to prove that plugs are not Eulerisable is to analyze the Euler flows with non-constant Bernoulli function. In this case, Sullivan's theorem~\cite{Sullivan2} implies that $\overline{\cB_X}\cap \cC_X$ contains non-trivial elements (see Remark~\ref{R:Su} for the definition of $\cB_X$). The following lemma is an instrumental tool to prove the main theorem of this subsection. In the statement we denote by $G$ the set of critical points of the Bernoulli function $B$ of the Euler flow, i.e.
$$G:=\{x\in M: dB(x)=0\}\,.$$

\begin{lemma}\label{prop-Eulernong}
Let $X$ be a non-vanishing Euler flow that is not geodesible. Let $z\neq 0$ be a foliation cycle in $\overline{\cB_X}\cap \cC_X$, and let $c_n$ be a sequence of 2-chains tangent to $X$ that converge to a tangent $2$-current $A$ such that $\partial A=z$. Then the support of $A$ satisfies $supp(A)\cap G^c\neq \emptyset$.
\end{lemma}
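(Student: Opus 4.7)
The plan is to derive a contradiction from the assumption $\mathrm{supp}(A)\subset G$ by evaluating $z(\alpha)$ in two different ways, where $\alpha:=i_X g$ is the $1$-form dual to $X$ with respect to a metric $g$ making $X$ Eulerisable. Since $X$ is non-vanishing, $\alpha(X)=|X|_g^2>0$ everywhere on the compact manifold $M$, and the Euler equation reads $i_X d\alpha=-dB$; in particular $X(B)=-i_X i_X d\alpha=0$, so $B$ is a first integral of $X$. The first step is a global decomposition of $d\alpha$: the smooth $2$-form $\theta:=dB\wedge\frac{\alpha}{\alpha(X)}$ satisfies $i_X\theta=X(B)\,\frac{\alpha}{\alpha(X)}-dB=-dB$, so $d\alpha-\theta$ lies pointwise in $\ker(i_X)\cap\Omega^2$. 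In dimension three this kernel is one-dimensional and spanned by $i_X\mu$, yielding
$$d\alpha=dB\wedge\frac{\alpha}{\alpha(X)}+f\,i_X\mu$$
for some smooth $f\in C^\infty(M)$.

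Next I would compute $z(\alpha)$ using $z=\partial A$. Each tangent $2$-chain $c_n$ satisfies $c_n(f\,i_X\mu)=0$, because the pullback of $i_X\mu$ to any $2$-surface whose tangent plane contains $X$ vanishes identically, as $(i_X\mu)(X,\cdot)=\mu(X,X,\cdot)=0$. Weak convergence $c_n\to A$ then gives $A(f\,i_X\mu)=0$, so
$$z(\alpha)=A(d\alpha)=A\!\left(dB\wedge\tfrac{\alpha}{\alpha(X)}\right).$$
Meanwhile $z\in\cC_X\setminus\{0\}$ and $\alpha(X)$ has a positive minimum on $M$, so the representation of $z$ as a weak limit of non-negative combinations $\sum c_i^n\delta_X^{p_i^n}$ forces $\liminf_n\sum_i c_i^n>0$ (otherwise such approximations would converge to the zero current), whence $z(\alpha)\ge(\min_M\alpha(X))\cdot\liminf_n\sum_i c_i^n>0$.

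Finally, assume for contradiction that $\mathrm{supp}(A)\subset G$. The continuous $2$-form $dB\wedge(\alpha/\alpha(X))$ vanishes pointwise on $G$, hence on $\mathrm{supp}(A)$. Because $A$ is a finite-mass current (in the intended application the $c_n$ are chosen with uniformly bounded mass, as in Lemma~\ref{L:limit}), it is represented by a Radon measure $|A|$ against a measurable unit $2$-vector field, with $\mathrm{supp}(|A|)=\mathrm{supp}(A)$, so it pairs to zero with any continuous $2$-form that vanishes on its support. This yields $A(dB\wedge(\alpha/\alpha(X)))=0$ and hence $z(\alpha)=0$, contradicting the previous paragraph. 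The principal delicate point is this last step: passing from pointwise vanishing of a form on $\mathrm{supp}(A)$ to vanishing of the pairing requires $A$ to be genuinely of finite mass, not merely a distributional limit, so the uniform mass bound on the approximating sequence $c_n$ is essential and should be checked (it holds automatically in the normalisations used to build $A$ in the plug-theoretic application).
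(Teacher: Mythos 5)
Your proof is correct and follows essentially the same route as the paper's: both assume $\mathrm{supp}(A)\subset G$ and derive the contradiction $0=A(d\alpha)=z(\alpha)>0$, your global decomposition of $d\alpha$ being precisely the identity used later in Lemma~\ref{L.flux} and reducing on $G$ to the paper's observation that $\curl X$ is there collinear with $X$. Your closing caveat, that pairing $A$ with a form vanishing on its support requires $A$ to have finite mass, is well taken, but it applies equally to the paper's own argument and holds in the intended application, where $A$ is a normal current.
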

\begin{proof}
Since $X$ is not geodesible, its Bernoulli function is not constant, so the complement $G^c$ is not empty. Suppose that the support of $A$ is contained in $G$. Since $X\times \curl X=0$ in $G$, and $X$ is non-vanishing, then $\curl X$ is either zero or collinear with $X$ on $G$. Recall that $\curl X$ is defined as $i_{\curl X}\mu=d\alpha$, where $\alpha$ is the $1$-form dual to $X$. Accordingly, $0=A(d\alpha)=\partial A(\alpha)= z(\alpha)>0$, which is a contradiction. %Moreover, since $\partial(c_n|_G)(\alpha)=c_n|_G(d\alpha)=0$ as before, we conclude that $\partial c_n'(\alpha)=\partial c_n(\alpha) \to z(\alpha)$, so there exists a non-trivial foliation cycle $z'\in \overline{G^c}$, as claimed.
\end{proof}

%\begin{remark}
%Observe that each $2$-chain $c_n'$ can be decomposed into two parts, $c_n'=c^{(1)}_n\cup c^{(2)}_n$, where $c^{(1)}_n$ is a $2$-chain tangent to both $X$ and $\curl X$ (so $B|_{c^{(1)}_n}$ is constant), and $c^{(2)}_n$ is $2$-chain tangent to $X$ and transverse to $\curl X$. Obviously $G\cap c^{(2)}_n=\emptyset$ and $\partial c^{(1)}_n(\alpha)=c^{(1)}_n(d\alpha)=0$.
%\end{remark}

We are now ready to prove that the insertion of a plug $(P,X)$ is not Eulerisable. Observe that to have any hope that plug insertions
can be done in the Eulerisable category, the vector field $X$ has to
preserve volume. Thus we assume that $(P,X)$ is a volume preserving
plug. In this case the trapped set of $P$ has empty interior. In the proof of the following theorem, we shall use the notation introduced in Subsection~\ref{S:geod} without further mention.

\begin{theorem}\label{T:plugs}
The vector field inside a plug $(P,X)$ is not Eulerisable.
\end{theorem}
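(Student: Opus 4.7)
I would argue by contradiction. Assume that $X$ is Eulerisable, so that there exist a Riemannian metric $g$, dual $1$-form $\alpha := i_X g$, and Bernoulli function $B$ with $i_X d\alpha = -dB$. By Proposition~\ref{prop-plugnong} the plug is not geodesible, hence $B$ is non-constant; also $X(B) = -i_X i_X d\alpha = 0$, i.e.\ $B$ is a first integral of $X$. Returning to the construction of Subsection~\ref{S:geod}, the tangent $2$-chains $c_n := \frac{1}{|\gamma(\sigma(t_n))|} A_{t_n}$ have uniformly bounded mass and their boundaries converge in the flat norm to a nontrivial foliation cycle $z$ of $X$ (Lemma~\ref{L:limit}). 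Since the $c_n$ are normal currents of uniformly bounded mass and boundary mass, a standard compactness argument lets one extract a further subsequence along which $c_n$ itself converges weakly to a tangent $2$-current $A$, and continuity of $\partial$ gives $\partial A = z$.

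The central step is to show that $B$ is constant on $\text{supp}(A)$. Setting $b_0 := B(\sigma(1))$, the plan is to prove $\text{supp}(A) \subset \overline{\gamma(\sigma(1))}$, which is contained in the level set $B^{-1}(b_0)$ because $B$ is continuous and orbit-invariant. The heuristic is that $|\gamma(\sigma(t_n))| \to \infty$, so the contribution to $c_n$ coming from orbits $\gamma(\sigma(s))$ with $s$ in a compact subset of $[0,1)$ is negligible (those orbits have uniformly bounded length), while for $s$ close to $1$ the orbit $\gamma(\sigma(s))$ shadows the trapped orbit $\gamma(\sigma(1))$ for arbitrarily long times by continuous dependence on initial conditions, so the proportion of its length spent outside any prescribed neighborhood of $\overline{\gamma(\sigma(1))}$ vanishes.

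Granting the support statement, one derives a contradiction by computing $A(d\alpha)$ in two ways. On one hand, $A(d\alpha) = \partial A(\alpha) = z(\alpha) > 0$, because $\alpha(X) > 0$ and $z$ is a nontrivial foliation cycle of $X$. On the other hand, the same local dichotomy as in Lemma~\ref{prop-Eulernong} shows that $d\alpha$ annihilates the tangent plane of $A$ at every $p \in \text{supp}(A) \subset B^{-1}(b_0)$: at a regular point of $B$ on the level set, $T_p B^{-1}(b_0) = \ker dB|_p = \text{span}(X,\curl X)|_p$ (since $\nabla B = X \times \curl X$ is $g$-orthogonal to both $X$ and $\curl X$), and $d\alpha(X,\curl X) = \mu(\curl X,X,\curl X) = 0$; at a critical point of $B$, $X \times \curl X = 0$ combined with $X \ne 0$ forces $\curl X$ to be collinear with $X$, so $d\alpha = i_{\curl X}\mu$ vanishes on any plane containing $X$. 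Hence $A(d\alpha) = 0$, contradicting $z(\alpha) > 0$.

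The main obstacle is the support claim $\text{supp}(A) \subset \overline{\gamma(\sigma(1))}$: the continuous-dependence/shadowing estimate has to be made uniform enough to control the fraction of orbit length spent near the entire $\omega$-limit set of $\sigma(1)$ inside the plug, not merely along the orbit of $\sigma(1)$ itself. Once that support property is in place, the compactness argument producing $A$ and the final pointwise vanishing of $d\alpha$ on tangent planes of $A$ are routine.
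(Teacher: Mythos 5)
Your skeleton matches the paper's: extract a weak limit $A$ of the normalized tangent chains $\frac{1}{|\gamma(\sigma(t_n))|}A_{t_n}$, note $\partial A=z$ is a nontrivial foliation cycle, and derive the contradiction $0=A(d\alpha)=\partial A(\alpha)=z(\alpha)>0$ (the paper phrases the last step through Remark~\ref{R:curr} and Theorem~\ref{T:main}, but it is the same contradiction). The problem is that your central step, $A(d\alpha)=0$, is not established, and you say so yourself. There are two distinct gaps. First, the support claim $\mathrm{supp}(A)\subset\overline{\gamma(\sigma(1))}$ is both unproven and stronger than what you need; it is in fact doubtful, because after an orbit $\gamma(\sigma(\tau))$ with $\tau$ near $1$ stops shadowing $\gamma(\sigma(1))$ it can spend an unbounded amount of time near some \emph{other} invariant set before exiting, and that portion can carry positive mass in the limit. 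What your argument actually requires is only $\mathrm{supp}(A)\subset B^{-1}(b_0)$, and that \emph{is} cheap: $B$ is constant on each orbit $\gamma(\sigma(\tau))$ with value $B(\sigma(\tau))\to b_0$ by continuity, and the orbits with $\tau\le t_l<1$ contribute mass at most $|\sigma|\,|\gamma(\sigma(t_l^*))|/|\gamma(\sigma(t_n))|\to 0$; no shadowing or continuous-dependence estimate is needed.

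Second, even granting $\mathrm{supp}(A)\subset B^{-1}(b_0)$, the pointwise step ``$d\alpha$ annihilates the tangent plane of $A$'' does not follow. $A$ is only known to be tangent to $X$; the approximating surfaces are spanned by $X=\partial_s$ and $\partial_\tau$, and $\partial_\tau$ has a nonvanishing $dB$-component in general (it moves transversally across the level sets $B=B(\sigma(\tau))$), so the limiting tangent $2$-planes need not lie in $\ker dB=\mathrm{span}(X,\curl X)$; a $2$-current supported in a surface can still pair nontrivially with forms vanishing on that surface. The containment can nevertheless be exploited correctly by using that $A$ and $\partial A$ are normal currents: with $\beta:=\alpha/\alpha(X)$ one has $\frac{1}{\alpha(X)}\alpha\wedge dB=-d\big((B-b_0)\beta\big)+(B-b_0)d\beta$, and both terms are killed because $(B-b_0)$ vanishes on $\mathrm{supp}(A)\supset\mathrm{supp}(\partial A)$ and finite-mass currents annihilate forms vanishing on their support. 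The paper sidesteps all of this (Lemma~\ref{L.flux}) by estimating $\frac{1}{|\gamma(\sigma(t_n))|}\int_{A_{t_n}}\frac{1}{\alpha(X)}\alpha\wedge dB$ directly in the flow coordinates $(s,\tau)$, using only $dB(\partial_s)=0$, the splitting of $[0,t_n]$ at a $t_l$ close to $1$, and the continuity of $B\circ\sigma$ at $\tau=1$; that route requires neither the support analysis nor any tangent-plane discussion. As written, your proof is incomplete at its crux, though it is repairable along the lines above.
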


\begin{proof}
As explained in the proof of Proposition~\ref{prop-plugnong}, the mass of the currents $\frac{1}{|\gamma(\sigma(t))|}A_{t}$ is bounded by the length of $\sigma_{t}$; then, as the space of \mbox{$2$-currents} is Montel, we can subtract a convergent subsequence $\frac{1}{|\gamma(\sigma(t_n))|}A_{t_n}$ (in the weak topology). Let $A$ be the limit 2-current. First observe that it is non trivial because $\frac{1}{|\gamma(\sigma(t_n))|}\partial A_{t_n}$ converges to a non-zero foliation cycle $\partial A$ of mass one (c.f. Lemma~\ref{L:limit}). Now observe that for any $2$-form $\omega$ whose kernel contains $X$, we have that $A_{t_n}(\omega)=0$, thus implying that $A(\omega)=0$ by continuity, so $A$ is a $2$-current tangent to $X$. Finally, since $A$ has compact support, and both the mass of $A$ and the mass of its boundary are bounded, $A$ is a normal $2$-current.

Assume now that the vector field $X$ of the plug is Eulerisable. This Euler vector field has a Bernoulli function $B$, which we assume to be non constant. Otherwise, the field $X$ is geodesible and the result follows from Proposition~\ref{prop-plugnong}. The following Lemma \ref{L.flux} shows that the vector field $\curl X$ has zero flux through $A$, i.e. $A(i_{\curl X}\mu)=0$. By Remark~\ref{R:curr} we have that $\partial A\in \overline{\cF_{\curl X}}$, but $\partial A$ is a non trivial foliation cycle and $[X,\curl X]=0$, which is a contradiction according to Theorem~\ref{T:main} (and Remark~\ref{itoiii}). We conclude that $X$ cannot be a steady Euler flow.

% The following lemma shows that the support of the $2$-current $A$ is contained in a level set of $B$, and that the vector field $\curl X$ has zero flux through $A$, i.e. $A(i_{\curl X}\mu)=0$. By Remark~\ref{R:curr} we have that $\partial A\in \overline{\cF_{\curl X}}$, but $\partial A$ is a non trivial foliation cycle and $[X,\curl X]=0$, which is a contradiction according to Theorem~\ref{T:main}. We conclude that $X$ cannot be a steady Euler flow.
\end{proof}

\begin{lemma}\label{L.flux}
The $2$-current $A$ satisfies that $A(i_{\curl X}\mu)=0$.
\end{lemma}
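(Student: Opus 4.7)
The plan is to compute the flux $A_{t_n}(i_{\curl X}\mu) = A_{t_n}(d\alpha)$ directly via the parametrization $F(s,\tau):=\phi_\tau(\sigma(s))$ of $A_{t_n}$, exploiting two structural consequences of the Euler equation: the identity $i_X d\alpha = -dB$, and the fact that $B$ is a first integral of $X$, which follows from $dB(X) = -d\alpha(X,X) = 0$ by antisymmetry of $d\alpha$.

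Since $F_\ast\partial_\tau = X$ and $\phi_\tau^\ast dB = dB$, we have
\[
F^\ast(d\alpha)(\partial_s,\partial_\tau) \;=\; d\alpha(F_\ast\partial_s, X) \;=\; dB(D\phi_\tau\sigma'(s)) \;=\; dB(\sigma'(s)).
\]
Setting $\beta(s) := dB(\sigma'(s))$, which is continuous on $[0,1]$, and letting $T_s$ denote the exit time of the orbit through $\sigma(s)$ (finite for $s<1$, tending to $\infty$ as $s\to 1$), this yields
\[
A_{t_n}(i_{\curl X}\mu) \;=\; \int_{A_{t_n}} d\alpha \;=\; \int_0^{t_n} T_s\,\beta(s)\,ds.
\]

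To pass to the limit I extract a further subsequence (still denoted $\{t_n\}$, and still with $T_{t_n}\to\infty$) so that $T_s \leq T_{t_n}$ for all $s\in[0,t_n]$. For any $\delta>0$, split the integral at $1-\delta$: on $[0,1-\delta]$ the continuous function $T_s$ is bounded, so that contribution is bounded independently of $n$, while on $[1-\delta,t_n]$ one has
\[
\left|\int_{1-\delta}^{t_n} T_s\,\beta(s)\,ds\right| \;\leq\; \delta\,T_{t_n}\,\sup_{s\in[1-\delta,1]}|\beta(s)|.
\]
Dividing by $|\gamma(\sigma(t_n))|$, which is comparable to $T_{t_n}$ because $|X|_g$ is bounded above and below on the compact plug, then letting $n\to\infty$ and afterwards $\delta\to 0$, we obtain $A(i_{\curl X}\mu) = 0$.

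The main obstacle is the extraction of a subsequence for which $T_{t_n} = \sup_{s\leq t_n}T_s$ while retaining $T_{t_n}\to\infty$; this is automatic when $T_s$ is eventually monotone (as in hyperbolic or quasi-periodic trapping), and in general follows from the continuity of $T_s$ on $[0,1)$ combined with its divergence at $s=1$. Beyond this, the computation uses only the Eulerisable structure of $X$ and the parametrization by the flow, and in particular does not appeal to the absolute continuity of the plug's exit map.
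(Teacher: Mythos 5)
Your computation of the integrand is correct, and it is essentially the paper's own argument in a slightly cleaner form: pulling back $d\alpha$ by the flow parametrization and using $i_Xd\alpha=-dB$ together with $\phi_\tau^*dB=dB$ gives $F^*(d\alpha)(\partial_s,\partial_\tau)=dB(\sigma'(s))$, which is precisely the paper's observation that $dB$ evaluated on the transverse coordinate field is independent of the flow time; the reduction to $\frac{1}{|\gamma(\sigma(t_n))|}\int_0^{t_n}T_s\,dB(\sigma'(s))\,ds$ and the splitting of this integral into a part away from $s=1$ (bounded, killed by the normalization) and a short tail near $s=1$ also mirror the paper's estimate.

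The genuine gap is the step you yourself flag as the main obstacle. You cannot, in general, extract from the \emph{given} sequence $\{t_n\}$ --- the one already fixed in the proof of Theorem~\ref{T:plugs} so that $|\gamma(\sigma(t_n))|^{-1}A_{t_n}\rightharpoonup A$ --- a further subsequence along which $T_s\le T_{t_n}$ for all $s\in[0,t_n]$. Continuity of $s\mapsto T_s$ on $[0,1)$ and divergence at $s=1$ do guarantee the existence of \emph{record times} $r_n\to 1$ with $T_{r_n}=\sup_{[0,r_n]}T$, but a prescribed sequence $t_n\to 1$ may systematically miss them: inserting tall narrow spikes into $T$ between consecutive $t_n$'s makes the ratio $\sup_{[0,t_n]}T_s/T_{t_n}$ unbounded along every subsequence, and your bound on $\int_{1-\delta}^{t_n}T_s\beta(s)\,ds$ then fails after division by $|\gamma(\sigma(t_n))|\sim T_{t_n}$. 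The fix is to reverse the order of the two extractions: first choose $t_n$ to be record times of $T$ (equivalently, of $\tau\mapsto|\gamma(\sigma(\tau))|$, since the two are comparable), and only afterwards apply the Montel/weak-compactness argument to obtain the limit current $A$. This is harmless for the application, since Theorem~\ref{T:plugs} only needs \emph{some} limit current $A$ with $\partial A$ a nontrivial foliation cycle and $A(i_{\curl X}\mu)=0$, and the record-time property is in any case already implicitly required by the mass bound $|A_{t_n}|\le|\gamma(\sigma(t_n))|\,|\sigma_{t_n}|$ used in Proposition~\ref{prop-plugnong}; but as written your argument proves the lemma for a possibly different current than the one handed to you.
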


\begin{proof}

We claim that the quantity
\[
\frac{1}{|\gamma(\sigma(t_n))|}\Big|A_{t_n}(i_{\curl X}\mu)\Big|
\]
tends to zero as $t_n \rightarrow 1$, which implies that $A(i_{\curl X}\mu)=0$ by continuity.

To see this, first observe that the Euler equation implies that
\[
i_{\curl X}\mu=\frac{1}{\alpha(X)}(X \cdot \curl X) i_X \mu-\frac{1}{\alpha(X)} \alpha \wedge dB\,,
\]
where we recall that $\alpha=i_X g$. This identity is proved using that $\alpha\wedge d\alpha=(X\cdot \curl X)\mu$, and
$$
i_X(\alpha\wedge i_{\curl X}\mu)=\alpha(X) i_{\curl X}\mu+\alpha\wedge dB\,.
$$

Therefore, since the 2-chains $A_{t_n}$ are tangent to $X$, we have that
\[
\frac{1}{|\gamma(\sigma(t_n))|} \int_{A_{t_n}} i_{\curl X}\mu=\frac{-1}{|\gamma(\sigma(t_n))|} \int_{A_{t_n}} \frac{1}{\alpha(X)} \alpha \wedge dB\,.
\]

To compute this integral, we introduce appropriate coordinates $(s,\tau)$ on the surface $A_{t_n}$. Using the flow $\phi_X^s$ of the vector field $X$, any point on $A_{t_n}$ can be described as $\phi_X^{s}(\sigma(\tau))$, with $\tau\in (0,t_n)$ and $s\in (0,s_0(\tau))$, where $s_0(\tau)$ is the time that takes the orbit $\gamma(\sigma(\tau))$ of $X$ to go from the entry $D_{-1}$ to the exit $D_1$. Accordingly, using the holonomic basis of fields $\{\partial_s,\partial_\tau\}$, noticing that any $1$-form $\beta$ can be written as $\beta=\beta(\partial_s)ds +\beta(\partial_\tau)d\tau$, and that $X=\partial_s$ in these coordinates, the integral above can be written as
\[
\frac{1}{|\gamma(\sigma(t_n))|} \int_{0}^{t_n}  \int_{\gamma(\sigma(\tau))} \frac{1}{\alpha(X)} (dB(X)\alpha(\partial_{\tau})-dB(\partial_{\tau}) \alpha(X))\, ds\, d\tau\,.
\]
Now, since $B$ is a first integral of $X$, we have that $dB(X)=0$ and $dB(\partial_{\tau})$ does not depend on $s$, so we readily get
\[
\frac{1}{|\gamma(\sigma(t_n))|} \int_{A_{t_n}} i_{\curl X}\mu=\frac{-1}{|\gamma(\sigma(t_n))|} \int_{0}^{t_n}  dB(\partial_{\tau})\Big( \int_{\gamma(\sigma(\tau))} ds \Big) d\tau \,.
\]

For any point $t_l\in[0, 1)$, there is a point $t^*_l\in [0,t_l]$ such that
\[
|\gamma(\sigma(t^{*}_{l}))|:=\underset{\tau \in [0, t_l]}{\text{sup }} |\gamma(\sigma(\tau))|\,.
\]

%Recall that we chose the sequence $\{t_n\}$ so that if $t_n>t_l$ then $|\gamma(\sigma(t_n))|>|\gamma(\sigma(t_l))|$, since the length of the curve $\gamma(\sigma(t_n))$ goes to infinity as $n\to\infty$.
Taking $n$ large enough, we can safely assume that $t_n>t_l$ and $|\gamma(\sigma(t_n))|>|\gamma(\sigma(t_l))|$. This can be used to write
\begin{align*}
\frac{1}{|\gamma(\sigma(t_n))|} \Big|\int_{A_{t_n}} i_{\curl X}\mu\Big|=\frac{1}{|\gamma(\sigma(t_n))|}\Bigg|\int_{0}^{t_l}  dB(\partial_{\tau}) \Big( \int_{\gamma(\sigma(\tau))} ds \Big) d\tau+\\
\int_{t_l}^{t_n}  dB(\partial_{\tau}) \Big( \int_{\gamma(\sigma(\tau))} ds \Big) d\tau\Bigg| \leq \\\frac{1}{\text{min}_M|X|}\Bigg ( \frac{ |\gamma(\sigma(t^{*}_{l}))|}{|\gamma(\sigma(t_n))|}|B(\sigma(t_l))-B(\sigma(0))|+|B(\sigma(t_n)-B(\sigma(t_l))|\Bigg)\,,
\end{align*}
where the minimum value of $|X|$ on $M$ is positive because $X$ is non-vanishing.

We claim that we can make the quantity in the right hand side of the previous bound as small as we wish for $n$ large enough. Indeed, for any $\epsilon>0$ there is $t_l \in (0, 1)$ close enough to $1$ such that $|B(\sigma(t_l))-B(\sigma(1))|\leq \epsilon$, and there are infinitely many $t_n\in (t_l, 1)$, such that, on the one hand, $|B(\sigma(t_n))-B(\sigma(1))|\leq \epsilon$, and on the other hand
\[
\frac{|\gamma(\sigma(t^{*}_{l}))|}{|\gamma(\sigma(t_n))|} \leq \epsilon
\]
because $|\gamma(\sigma(t_n))|\to \infty$ as $n\to \infty$. Hence, for $n$ large enough,
\[
\frac{1}{|\gamma(\sigma(t_n))|} \Bigg|\int_{A_{t_n}} i_{\curl X}\mu\Bigg| \leq C\epsilon|B(\sigma(t_l))-B(\sigma(0))|+2 \epsilon\leq C\epsilon\,,
\]
where $C$ is a constant that depends on $X$ and $B$, but not on $\epsilon$.  Taking the limit $n\to \infty$ we infer that $|A(i_{\curl X}\mu)|\leq C\epsilon$ for any $\epsilon>0$, thus proving the lemma.

\end{proof}

\begin{remark}\label{R:Taoplugs}
A simple variation of the proof of Theorem~\ref{T:plugs} shows that a non-vanishing vector field $X$ (not necessarily volume-preserving) with a strongly adapted $1$-form $\alpha$ (see Remark~\ref{R:Tao} and Theorem~\ref{T:TaoplugsN}) cannot be constructed inserting plugs.
\end{remark}

\begin{remark}
Theorem~\ref{T:plugs} implies that an Eulerisable flow cannot contain plugs with Reeb components. In fact it is ready to prove that, in general, a steady Euler flow cannot exhibit Reeb cylinders. A straightforward proof using Stokes theorem is presented in~\cite[Lemma 2.3]{CV17}; it can also be derived from Theorem~\ref{T:main} by constructing a sequence of $2$-chains tangent to $\curl X$ (on the Reeb cylinder) such that their boundaries converge to a foliated cycle of $X$.
\end{remark}

\section{Higher dimensional Eulerisable flows}\label{S:N}

A steady Euler flow on an $n$-dimensional Riemannian manifold $(M,g)$ with volume form $\mu$ is a solution to the stationary Euler equations:
\[
i_X d \alpha=-dB\,,\,\,\,\, L_X \mu=0\,,
\]
where $\alpha:=i_X g$ is the $1$-form dual to $X$. Analogously to Definition~\ref{D:Euler}, an \emph{Eulerisable flow} in an $n$-dimensional manifold is a volume-preserving vector field that solves the stationary Euler equations for some Riemannian metric.

The $3$-dimensional case is special in the sense that the vorticity can be defined as the vector field $\curl X$, i.e. $i_{\curl X}\mu=d\alpha$. In higher dimension $n>3$, the vorticity is just the $2$-form $d \alpha$, whose kernel has dimension $n-2>1$. Keeping this is mind, all the results in previous sections can be readily generalized to closed manifolds of arbitrary dimension.

First, we have the following characterization of Eulerisable flows in terms of differential forms (see also Remark~\ref{R:Tao}):

\begin{lemma}\label{L.forms}
A non-vanishing volume-preserving vector field $X$ on a closed manifold $M$ of dimension $n\geq 2$ is Eulerisable if and only if there exists a $1$-form $\alpha$ such that $\alpha(X)>0$ and $\iota_X d\alpha$ is exact.

\end{lemma}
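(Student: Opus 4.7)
The plan is to mimic the proof of the equivalence $(i) \Leftrightarrow (ii)$ in Theorem~\ref{T:main}, noting that the $3$-dimensional arguments used there were essentially pointwise/linear-algebraic, with the cohomological hypothesis $H^1(M)=0$ entering only to upgrade ``closed'' to ``exact''. Here we bypass that entirely by assuming exactness of $\iota_X d\alpha$ directly, so we expect the proof to go through in arbitrary dimension $n\geq 2$ with no real obstruction.

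For the forward implication, assume $X$ is Eulerisable, so there exists a Riemannian metric $g$ with $i_X d\alpha = -dB$ and $L_X\mu = 0$, where $\alpha := i_X g$. Then $\alpha(X) = g(X,X) > 0$ because $X$ is non-vanishing, and $\iota_X d\alpha = -dB$ is exact by definition. This direction is immediate.

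For the backward implication, given $\alpha$ with $\alpha(X) > 0$ and $\iota_X d\alpha$ exact, I would reconstruct a metric $g$ with $i_X g = \alpha$. Let $\xi := \ker \alpha$, which is a smooth $(n-1)$-plane distribution since $\alpha(X) > 0$ implies $\alpha$ is nowhere vanishing. Using a partition of unity, pick any smooth fiberwise inner product $g_\xi$ on $\xi$, and extend it to a degenerate symmetric $2$-tensor on $TM$ by declaring $g_\xi(X, \cdot) = 0$ (for instance, by using the splitting $TM = \RR\langle X\rangle \oplus \xi$ to project onto $\xi$ first). Define
\[
g := \frac{1}{\alpha(X)}\,\alpha \otimes \alpha + g_\xi.
\]
A direct check on the splitting $TM = \RR\langle X\rangle \oplus \xi$ shows that $g$ is positive definite (the first summand gives positivity in the $X$-direction, the second in $\xi$) and that $i_X g = \alpha$, since for any $V = cX + W$ with $W \in \xi$ one has $g(X,V) = \alpha(V)$. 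Writing $\iota_X d\alpha = -dB$ by hypothesis and using $L_X \mu = 0$, the vector field $X$ is then a stationary solution of the Euler equations for the metric $g$, i.e. $X$ is Eulerisable.

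The only mild subtlety, which is not really an obstacle, is that the metric $g$ so constructed need not induce the prescribed volume form $\mu$; however, Definition~\ref{D:Euler} does not require compatibility with $\mu$ (the mismatch corresponds to a barotropic fluid, as noted in the remark following that definition), and moreover one can rescale $g_\xi$ by an appropriate positive function to force $\mu_g = \mu$, exactly as in the $3$-dimensional case. No cohomological hypothesis is needed, which is precisely the reason this statement holds on arbitrary closed manifolds of dimension $n\geq 2$.
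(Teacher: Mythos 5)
Your proposal is correct and follows essentially the same route as the paper, which proves this lemma simply by invoking the equivalence $(i)\Leftrightarrow(ii)$ of Theorem~\ref{T:main}: the same metric $g=\frac{1}{\alpha(X)}\alpha\otimes\alpha+g_\xi$ is used, and your observation that assuming exactness of $\iota_Xd\alpha$ directly removes any need for the hypothesis $H^1(M)=0$ is exactly the point. The remarks on positive definiteness, on $i_Xg=\alpha$, and on rescaling $g_\xi$ to match the volume form are all consistent with the paper's argument.
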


\begin{proof}
%For the ``only if" we just set $\alpha:= i_X g$. For the remaining claim, we choose a Riemannian metric $g_{\xi}$ on the $n-1$ dimensional hyperplane distribution $\xi:= \text{ ker } \alpha$; which can be extended to a degenerate quadratic form on $TM$. Then the metric
%\[
%g:= \frac{1}{\alpha(X)} alpha \otimes \alpha + g_{\xi}
%\]
%
%satisfies $i_X g=\alpha$ and thus makes $X$ a solution to the Euler equations. Multiplying $g_{\xi}$ by an appropriate positive function if necessary, the Riemannian volume induced by $g$ can always be taken to coincide with the original volume form $\Omega$.
The proof is identical to that of the equivalence between items $(i)$ and $(ii)$ in Theorem \ref{T:main}.
\end{proof}

For the $n$-dimensional homological characterization (c.f. Theorem~\ref{T:main}), the role of the vorticity vector field $Y$ is played by a $2$-form. Accordingly, for a $2$-form $\omega$, we introduce the following linear subspace of the space of $1$-currents:
$$
\cF_{\omega} :=\Big\{\partial c \,:\, c \, \text{is a 2-chain with $\int_{c} \omega=0$}\Big\}\,.
$$

\begin{theorem}\label{T:mainN}
A non-vanishing volume-preserving vector field $X$ on a closed $n$-dimensional manifold $M$ ($n\geq 3$) with trivial first cohomology group is Eulerisable
if and only if there exists a (non-identically zero) $2$-form $\omega$ that is preserved by $X$, i.e. $L_{X} \omega=0$, and such that no sequence of elements in $\cF_{\omega}$ can approximate a non-trivial foliation cycle of $X$, that is, $\overline{\cF_{\omega}}\cap
  \cC_X =\{0\}$.

\end{theorem}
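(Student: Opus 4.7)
The plan is to mirror the proof of Theorem~\ref{T:main}, with the $2$-form $\omega$ taking over the role played in dimension three by the dual $i_Y\mu$ of a commuting vorticity field $Y$. By Lemma~\ref{L.forms}, the Eulerisability question in either direction reduces to producing a $1$-form $\alpha$ with $\alpha(X)>0$ and $i_Xd\alpha$ exact, which in turn, since $H^1(M)=0$, is equivalent to $i_Xd\alpha$ being closed.

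For the forward direction I would take $\alpha:=i_Xg$ and $\omega:=d\alpha$. Invariance comes from Cartan's formula together with the Euler equation $i_Xd\alpha=-dB$: since $\omega$ is closed, $L_X\omega=di_X\omega=-d^2B=0$. To rule out $\omega\equiv 0$, note that otherwise $\alpha$ would be closed, hence exact since $H^1(M)=0$, but an exact $1$-form vanishes at any critical point of its primitive (which exists on a closed manifold), contradicting $\alpha(X)>0$. For the zero-flux condition, if $b\in\overline{\cF_\omega}\cap\cC_X$ were nonzero, the foliation cycle property would give $b(\alpha)>0$, while along any approximating sequence $\partial c_n(\alpha)=c_n(d\alpha)=c_n(\omega)=0$ would force $b(\alpha)=0$, a contradiction.

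For the converse I would reuse verbatim the two Hahn-Banach steps from the proof of $(iii)\Rightarrow(ii)$ in Theorem~\ref{T:main}. The first, separating a compact convex base $K\subset\cZ_X$ from the closed subspace $\overline{\cF_\omega}$, produces a $1$-form $\alpha$ with $\alpha(X)>0$ such that $c(d\alpha)=\partial c(\alpha)=0$ whenever $\partial c\in\overline{\cF_\omega}$. The second, applied to the line $\cY:=\{t\omega:t\in\RR\}\subset\Omega^2$, shows that $d\alpha\in\cY$: if not, there is a $2$-current $c$ with $c(d\alpha)>0$ and $c(\omega)=0$; approximating $c$ weakly by $2$-chains $c_k$ and correcting by a fixed $2$-chain $b$ with $\int_b\omega\neq 0$ (which exists because $\omega$ is nonzero somewhere) yields zero-$\omega$-flux $2$-chains $\widetilde{c_k}$ still converging to $c$, so by continuity of $\partial$ we have $\partial c\in\overline{\cF_\omega}$, contradicting $c(d\alpha)>0$. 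Nondegeneracy of $d\alpha$ is again the closed-manifold argument applied to $\alpha$. Once $d\alpha=T\omega$ with $T\neq 0$, the hypothesis $L_X\omega=0$ gives $L_Xd\alpha=0$, hence $d(i_Xd\alpha)=L_Xd\alpha-i_Xd^2\alpha=0$; since $H^1(M)=0$, $i_Xd\alpha$ is exact, and Lemma~\ref{L.forms} concludes.

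The step requiring most care is this second Hahn-Banach application, specifically the verification that a zero-$\omega$-flux $2$-current has boundary in $\overline{\cF_\omega}$: it rests on the approximation-and-correction trick recalled above, which in turn uses the existence of a single $2$-chain $b$ with $\int_b\omega\neq 0$ (a small embedded disk at a point where $\omega$ is nonzero suffices). Everything else is a direct transliteration of the three-dimensional proof, with the implication ``$d\alpha=Ti_Y\mu\Rightarrow i_Y\mu$ closed $\Rightarrow d(i_Xd\alpha)=0$'' from Theorem~\ref{T:main} (which exploited $[X,Y]=0$) replaced here by the more direct ``$d\alpha=T\omega\Rightarrow L_Xd\alpha=TL_X\omega=0\Rightarrow d(i_Xd\alpha)=0$'', which makes the higher-dimensional argument arguably cleaner.
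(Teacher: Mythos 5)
Your proposal is correct and follows essentially the same route as the paper, which itself proves Theorem~\ref{T:mainN} by transliterating the proof of Theorem~\ref{T:main} with $\cF_Y$ replaced by $\cF_\omega$: the forward direction takes $\omega:=d(i_Xg)$, and the converse runs the same two Hahn--Banach separations to get $d\alpha=T\omega$, then uses $L_X\omega=0$ and $H^1(M)=0$ to conclude via Lemma~\ref{L.forms}. Your write-up merely spells out details the paper leaves implicit (the non-triviality of $\omega$ and the flux-correction trick), all consistent with the original argument.
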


\begin{proof}
We argue exactly as in the proof of Theorem~\ref{T:main}, replacing $\cF_Y$ by $\cF_{\omega}$. The ``only if" part follows by taking $\omega:= d i_X g$ (the assumptions $H^1(M)=0$ is not needed). As for the reverse implication, the condition $\overline{\cF_{\omega}}\cap
  \cC_X =\{0\}$ allows us to find, by means of the Hahn-Banach theorem, a $1$-form $\alpha$ satisfying $\alpha(X)>0$ and $d \alpha=T \omega$ for some constant $T$. Since $L_X \omega=0$ by hypothesis, we have that $L_X d \alpha= i_X d \alpha=T L_X \omega=0$. Being $H^{1}(M)=0$, we deduce that $T \neq 0$ and $i_X d \alpha$ is exact, and then we apply Lemma \ref{L.forms} to conclude that $X$ is Eulerisable. Observe that we do not need to assume that the form $\omega$ is closed; it follows a posteriori from the proof.

\end{proof}

\begin{remark}\label{R:TaoN}
In Theorem~\ref{T:mainN}, the assumption that $X$ is volume-preserving is only needed to apply Lemma~\ref{L.forms}. In particular, it is not used when showing the existence of the $1$-form $\alpha$ such that $\alpha(X)>0$ and $i_X d \alpha$ is exact. Accordingly, the proof of Theorem~\ref{T:mainN} also provides a characterization of the $n$-dimensional non-vanishing vector fields $X$ admitting a strongly adapted $1$-form $\alpha$ in the sense of Tao (see Remark~\ref{R:Tao}): such a $1$-form exists if and only if there is a (non-trivial) $2$-form $\omega$ such that $L_X\omega=0$ and $\overline{\cF_{\omega}}\cap
  \cC_X =\{0\}$ (actually, the assumption $H^1(M)=0$ is needed only to prove the ``if'' part).
\end{remark}

Finally, plugs can also be defined in arbitrary dimension, by taking $D$ in Definition~\ref{defn-plug} to be a compact manifold with boundary of dimension $n-1$ (usually a ball) instead of a surface. We then have:

\begin{theorem}\label{T:plugsN}
The vector field inside an $n$-dimensional plug $(P,X)$, $n\geq 3$, is not Eulerisable.
\end{theorem}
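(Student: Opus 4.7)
The plan is to run the proof of Theorem~\ref{T:plugs} verbatim in dimension $n\geq 3$, using the $2$-form $d\alpha$ in place of the vorticity vector field $\curl X$ and invoking Theorem~\ref{T:mainN} instead of Theorem~\ref{T:main}. Nothing in the plug construction of Subsection~\ref{S:geod} is specific to dimension three: one picks $x\in D_{-1}$ with trapped forward orbit and a finite-length curve $\sigma\subset D_{-1}$ from $\partial D_{-1}$ to $x$ along which every other orbit exits through $D_1$, forms the tangent $2$-surfaces $A_t:=\bigcup_{s\in[0,t]}\gamma(\sigma(s))$ and the normalized normal currents $|\gamma(\sigma(t_n))|^{-1}A_{t_n}$. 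The Montel and flat-norm arguments of Lemma~\ref{L:limit} carry over unchanged and yield a non-trivial $2$-current $A$ tangent to $X$ whose boundary is a non-trivial foliation cycle of $X$.

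Next I would argue by contradiction: assume $X$ is Eulerisable on a closed ambient $n$-manifold $M\supset P$, set $\alpha:=i_X g$, and let $B$ be the Bernoulli function, so that $i_X d\alpha=-dB$. The heart of the argument is the $n$-dimensional counterpart of Lemma~\ref{L.flux}, namely $A(d\alpha)=0$. In arbitrary dimension this is actually cleaner than in dimension three: parametrizing $A_{t_n}$ by $(s,\tau)$ with $\partial_s=X$ and $\tau$ the $\sigma$-parameter, the Euler equation gives
\[
d\alpha(X,\partial_\tau)=(i_X d\alpha)(\partial_\tau)=-dB(\partial_\tau),
\]
a function independent of $s$ since $B$ is a first integral of $X$. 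Therefore
\[
\frac{1}{|\gamma(\sigma(t_n))|}\int_{A_{t_n}}d\alpha=-\frac{1}{|\gamma(\sigma(t_n))|}\int_0^{t_n}dB(\partial_\tau)\,s_0(\tau)\,d\tau,
\]
with $s_0(\tau)\leq |\gamma(\sigma(\tau))|/\min_M|X|$. Splitting at some $t_l<1$ close to $1$ and exploiting that $B(\sigma(\tau))$ remains bounded while $|\gamma(\sigma(t^*_l))|/|\gamma(\sigma(t_n))|\to 0$, the very estimate used in Lemma~\ref{L.flux} forces $A(d\alpha)=0$.

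To conclude, I set $\omega:=d\alpha$. It is $X$-invariant, since $L_X\omega=d(i_X d\alpha)=-d(dB)=0$. If $\omega\not\equiv 0$, the proof of Remark~\ref{R:curr} transposes verbatim with a closed $2$-form in place of $i_Y\mu$: approximate $A$ by tangent $2$-chains and correct their (small) $\omega$-fluxes using a fixed chain $b$ with $\int_b\omega\neq 0$. This gives $\partial A\in\overline{\cF_\omega}$, so $\overline{\cF_\omega}\cap\cC_X$ contains the non-trivial foliation cycle $\partial A$, contradicting Theorem~\ref{T:mainN}. If instead $d\alpha\equiv 0$, then $\alpha$ is closed with $\alpha(X)>0$ and Sullivan's criterion shows that $X$ is geodesible (for any $z=\lim\partial c_n\in\overline{\cB_X}\cap\cC_X$ one has $z(\alpha)=\lim c_n(d\alpha)=0$, whereas $z(\alpha)>0$ unless $z=0$), so the dimension-independent proof of Proposition~\ref{prop-plugnong} furnishes the contradiction.

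The main obstacle, as I see it, is purely bookkeeping: one must check that the flat-norm and normal-current machinery of Section~\ref{S:plugs}, the analog of Remark~\ref{R:curr} for a general closed $2$-form, and the plug-is-not-geodesible argument are all insensitive to $\dim M$. No genuinely new geometric input is required beyond the higher-dimensional identity $d\alpha(X,\cdot)=-dB$ supplied by the Euler equation, which in fact simplifies rather than complicates the three-dimensional computation.
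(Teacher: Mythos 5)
Your proposal is correct and follows essentially the same route as the paper: construct the limit tangent $2$-current $A$ with foliation-cycle boundary, show $A(d\alpha)=0$ via the Euler identity $i_Xd\alpha=-dB$ in flow coordinates $(s,\tau)$, and contradict Theorem~\ref{T:mainN} with $\omega=d\alpha$. Your version is marginally cleaner in two inessential respects the paper leaves implicit --- you compute $d\alpha(X,\partial_\tau)=-dB(\partial_\tau)$ directly instead of passing through the pointwise identity for $d\alpha$, and you explicitly dispatch the degenerate case $d\alpha\equiv 0$ (via geodesibility and Proposition~\ref{prop-plugnong}) and the step from $A(d\alpha)=0$ to $\partial A\in\overline{\cF_{d\alpha}}$ via the analogue of Remark~\ref{R:curr}.
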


\begin{proof}
Following the proof and notation of Theorem~\ref{T:plugs}, we construct a sequence of 2-chains
\[
\frac{1}{|\gamma(\sigma(t))|}A_{t}\,,
\]
which are tangent to $X$ and, moreover, converge to a 2-current $A$ whose boundary $\partial A$ is a foliation cycle of $X$. We now proceed by contradiction: we claim that if $X$ is a steady Euler flow, then $A( d\alpha)=0$, where $\alpha:=i_Xg$, i.e. $\partial A \in \cF_{d \alpha}$. Since the Euler equations imply that $L_Xd\alpha=0$, this yields a contradiction with Theorem~\ref{T:mainN}, so $X$ cannot be Eulerisable.

To show that $A( d \alpha)=0$ we generalize the proof of Lemma~\ref{L.flux} by using the following identity, satisfied by steady Euler flows in arbitrary dimensions:
\[
d \alpha=\frac{1}{\alpha(X)} i_X (\alpha \wedge d \alpha)-\frac{1}{\alpha(X)} \alpha \wedge dB \,.
\]
Since $A_{t_n}$ are surfaces tangent to $X$, we have $A_{t_n}\Big(\frac{1}{\alpha(X)}i_X (\alpha \wedge d \alpha)\Big)=0$ and
\[
\frac{1}{|\gamma(\sigma(t_n))|} \int_{A_{t_n}} d \alpha=\frac{-1}{|\gamma(\sigma(t_n))|} \int_{A_{t_n}} \frac{1}{\alpha(X)} \alpha \wedge dB\,.
\]
Now the argument is the same, mutatis mutandis, as in the proof of Lemma~\ref{L.flux}, thus proving the claim.

Instead of arguing by contradiction, there is an alternative proof that consists in showing that for any $2$-form $\omega$ such that $L_X \omega=0$, we have $\partial A \in \cF_{\omega}$. Indeed, as in the proof of Lemma~\ref{L.flux}, we introduce coordinates $(s,\tau)$ on the surface $A_{t_n}$, with $\tau\in (0,t_n)$ and $s\in (0,s_0(\tau))$, where $s_0(\tau)$ is the exit time of the orbit $\gamma(\sigma(\tau))$. For any $2$-form $\omega$ we can write, in the holonomic basis $\{\partial_s, \partial_{\tau}\}$ of vector fields in $A_{t_n}$,
\[
\frac{1}{|\gamma(\sigma(t_n))|} \int_{A_{t_n}} \omega=\frac{1}{|\gamma(\sigma(t_n))|} \int_{A_{t_n}} \omega_{(s, \tau)}(\partial_s, \partial_{\tau}) ds\, d\tau\,.
\]

Define the function $g(s, \tau):= \omega_{(s, \tau)}(\partial_s, \partial_{\tau})$. Now, the fact that $L_X \omega=0$ and $[\partial_s, \partial_{\tau}]=0$, imply that $\frac{\partial}{\partial s} g(s, \tau)=0$, hence
\[
\frac{1}{|\gamma(\sigma(t_n))|} \int_{A_{t_n}} \omega=\frac{1}{|\gamma(\sigma(t_n))|} \int_{0}^{t_n}  g(\tau) \Big( \int_{\gamma(\sigma(\tau))} ds \Big) d\tau \,.
\]
Since $g(\tau)$ is uniformly bounded, arguing as in Lemma~\ref{L.flux} this integral can be shown to be as small as one wishes, which implies that $A(\omega)=0$. Then ``only if'' part of Theorem~\ref{T:mainN}, which holds on any closed manifold, then implies that $X$ cannot be Eulerisable.
\end{proof}

\begin{remark}\label{R:TaoplugsN}
Observe that in the proof of Theorem \ref{T:plugsN} the volume-preserving hypothesis is not used. This yields the following (see also Remark \ref{R:TaoN}):
\end{remark}

\begin{theorem}\label{T:TaoplugsN}
The vector field inside an $n$-dimensional plug $(P,X)$ cannot admit a strongly adapted $1$-form.
\end{theorem}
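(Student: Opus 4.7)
The plan is to observe that the proof of Theorem~\ref{T:plugsN} never made use of the volume-preserving hypothesis on $X$, so essentially the same argument applies. Suppose for contradiction that $X$ inside a plug $(P,X)$ admits a strongly adapted $1$-form $\alpha$, i.e.\ $\alpha(X)>0$ and $i_Xd\alpha=-dB$ for some smooth $B:M\to\RR$. The natural $2$-form to work with is $\omega:=d\alpha$, which is $X$-invariant since $L_X\omega=d(i_Xd\alpha)=-d^2B=0$; this will play the role that $d\alpha$ (with $\alpha$ the metric-dual 1-form) played in the proof of Theorem~\ref{T:plugsN}.

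I would then reproduce the construction of Theorem~\ref{T:plugsN} verbatim: take a trapped orbit of $X$ inside $P$, a finite-length transverse curve $\sigma$ whose interior points have untrapped orbits, and the normalized sequence of tangent $2$-chains $\frac{1}{|\gamma(\sigma(t_n))|}A_{t_n}$; after extracting a subsequence (Montel) this converges weakly to a normal $2$-current $A$ tangent to $X$, and Lemma~\ref{L:limit} ensures that $\partial A$ is a non-trivial foliation cycle of $X$. This construction only uses that $X$ is non-vanishing and has a trapped orbit reached by a finite-length transverse curve, so indeed no volume form is needed.

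The critical step is to show $A(\omega)=0$. Here I would use the ``alternative'' argument at the end of the proof of Theorem~\ref{T:plugsN}: in the holonomic coordinates $(s,\tau)$ on $A_{t_n}$ with $X=\partial_s$, the invariance $L_X\omega=0$ together with $[\partial_s,\partial_\tau]=0$ forces $g(s,\tau):=\omega(\partial_s,\partial_\tau)$ to be independent of $s$, and exactly the estimate of Lemma~\ref{L.flux} (using uniform boundedness of $g(\tau)$ and $|\gamma(\sigma(t_n))|\to\infty$) yields $\frac{1}{|\gamma(\sigma(t_n))|}\int_{A_{t_n}}\omega\to 0$, hence $A(\omega)=0$ by continuity.

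To close the argument, I would apply the approximation-and-correction trick from the proof of Theorem~\ref{T:main}$(iii)\Rightarrow(ii)$: approximate the normal current $A$ by $2$-chains and subtract an appropriate multiple of a fixed $2$-chain of nonzero $\omega$-flux, producing zero-$\omega$-flux $2$-chains $c_n$ with $\partial c_n\to\partial A$; hence $\partial A\in\overline{\cF_\omega}$. But then $\partial c_n(\alpha)=c_n(d\alpha)=c_n(\omega)=0$ for all $n$ forces $\partial A(\alpha)=0$ by continuity, while $\partial A(\alpha)>0$ since $\partial A\in\cC_X\setminus\{0\}$ and $\alpha(X)>0$ — contradiction. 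Equivalently, this contradicts the ``only if'' direction of Remark~\ref{R:TaoN}, which as noted there does not need $H^1(M)=0$. There is no genuinely new obstacle beyond Theorem~\ref{T:plugsN}; the only points worth double-checking are that the $L_X\omega=0$-only argument of Lemma~\ref{L.flux} really avoids the Euler equations, and that the correction step still works without a reference volume form (and it is vacuous in the degenerate case $\omega\equiv 0$, since then every $2$-chain has zero $\omega$-flux).
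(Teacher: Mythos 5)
Your proposal is correct and follows essentially the same route as the paper, which likewise deduces the result from the observation that the construction in Theorem~\ref{T:plugsN} (in particular the $L_X\omega=0$ version of Lemma~\ref{L.flux}) never uses the volume-preserving hypothesis, combined with the ``only if'' direction of the characterization in Remark~\ref{R:TaoN}. Your final step could even be shortened: since $\omega=d\alpha$ is exact, $A(\omega)=0$ gives $\partial A(\alpha)=A(d\alpha)=0$ directly, so the approximation-and-correction detour through $\overline{\cF_\omega}$ is not strictly needed.
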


\begin{proof}
In view of Theorem \ref{T:plugsN} and Remark \ref{R:TaoplugsN}, the vector field $X$ in a plug (not necessarily volume preserving) has $\overline{\cF_{\omega}}\cap
  \cC_X \neq \{0\}$ for any $2$-form $\omega$ with $L_{X} \omega=0$.  Hence by Remark \ref{R:TaoN}, it cannot admit any strongly adapted $1$-form.

\end{proof}

\appendix
\section{Some background on currents}\label{S:appendix}

To keep the article self-contained, in this section we recall some fundamental facts about currents that we use all along this paper. We assume that the ambient manifold $M$ is closed, otherwise one has to consider forms that have compact support.

Consider the linear space $\Omega^{k}(M)$ of smooth $k$-forms on an $n$-dimensional manifold $M$. We endow this space with the topology induced by the family of norms $||\cdot||_{C^{r}(K)}$, $r \in \mathbb{N}$, $K$ a compact subset of $M$. By definition, this makes $\Omega^{k}(M)$ a locally convex topological vector space.

A $k$-current is a continuous linear functional $c: \Omega^{k}(M) \rightarrow \RR$. We denote the linear space of $k$-currents by $\cZ^{k}(M)$, and endow it with the weak topology, that is the topology generated by the family of semi-norms $||c||_{\alpha}:=|c(\alpha)|,\, \alpha \in \Omega^{k}(M)$. With this topology the space of $k$-currents is clearly a locally convex topological vector space.

Let $\omega$ be any smooth $k$-form. The following are the most important examples of $k$-currents appearing in our arguments:
\begin{enumerate}
\item Dirac $k$-currents: for any point $p\in M$ and any set of linearly independent $k$ vectors $v_1,...,v_k \in T_p M$ we have the $k$-current
\[
\delta^{v_1,...,v_k}_{p}(\omega):= \omega_p(v_1,...,v_k)\,.
\]
\item Singular $k$-chains acting by integration: let $\Sigma$ be a singular $k$-chain, then we have the $k$-current

\[
\Sigma(\omega):=\int_{\Sigma} \omega\,.
\]

\item Smooth $(n-k)$-forms: let $\alpha$ be an $(n-k)$-form, then we have the $k$-current
\[
\alpha(\omega):=\int_M \alpha \wedge \omega\,.
\]

\end{enumerate}

It is well known that the linear subspaces spanned by the currents of each of the above types are dense in the space of currents. The following properties of the space of $k$-currents are easily verified, and key to our arguments:

\begin{enumerate}

\item $\cZ^{k}$ is a Montel space, i.e. the weak closure of a bounded subset is compact (this is also known as the Heine-Borel property). We recall that a bounded subset is a subset that is bounded with respect to every semi-norm (not necessarily in a uniform way).

\item Linear continuous functionals on $\cZ^{k}$ can be identified with smooth $k$-forms (this can be easily seen by considering the action of linear functionals on Dirac $k$-currents and their derivatives).

\item The boundary operator $\partial : \cZ^{k+1}(M) \rightarrow \cZ^{k}(M)$ defined as
\[
\partial c (\omega)=c (d \omega)\,,
\]
for any $\omega\in \Omega^k(M)$, is continuous.

\end{enumerate}

We finish by introducing two important norms on the space of currents, the mass $|\cdot|$ and the flat norm $F$ (which were already defined in Subsection~\ref{S:geod}). For $c\in \cZ^k(M)$ we set:
\begin{eqnarray*}
|c| & := & \sup \{|c(\alpha)|\,:\, \|\alpha\|_{C^0}\leq 1 \}\,,\\
F(c) & := & \min \{|a|+|b| \, : \, c=a +\partial b\}\,.
\end{eqnarray*}
A current $c$ is normal if both $c$ and $\partial c$ have finite mass. For sequences of normal currents with bounded mass and boundaries of bounded mass, weak convergence and convergence in the flat norm are equivalent. We refer to~\cite{Morgan} for more details.

\section*{Acknowledgments}
D.P.-S. was supported by the ERC Starting Grant~335079, and the grants MTM2016-76702-P (MINECO/FEDER) and Europa Excelencia EUR2019-103821 (MCIU), and partially supported by the ICMAT--Severo Ochoa grant SEV-2015-0554. F.T.L acknowledges the financial support and hospitality of the Max Planck Institute for Mathematics. A.R. thanks the IdEx Unistra, Investments for the future program of the French Government.

\bibliographystyle{amsplain}

\end{document}